\newcommand{\bi}{\begin{itemize}}
\newcommand{\ei}{\end{itemize}}
\newcommand{\bn}{\begin{enumerate}}
\newcommand{\en}{\end{enumerate}}
\newcommand{\bq}{\begin{equation}}
\newcommand{\eq}{\end{equation}}
\newcommand{\A}{{\mathbb{A}}}
\newcommand{\C}{{\mathbb{C}}}
\newcommand{\DD}{\Delta}
\newcommand{\Elog}{E_{\log}}
\newcommand{\MUlog}{MU_{\log}}
\newcommand{\Q}{{\mathbb{Q}}}
\newcommand{\R}{{\mathbb{R}}}
\newcommand{\Z}{{\mathbb{Z}}}
\newcommand{\an}{\mathrm{an}}
\newcommand{\cl}{\mathrm{cl}}
\newcommand{\Hdg}{\mathrm{Hdg}}
\newcommand{\HdgMU}{\mathrm{Hdg}_{MU}}
\newcommand{\loc}{\mathrm{loc}}
\newcommand{\proj}{\mathrm{proj}}
\newcommand{\Sing}{\mathrm{Sing}_*}
\newcommand{\topp}{\mathrm{top}}
\newcommand{\Omtop}{\Omega_{\topp}}
\newcommand{\Spec}{\mathrm{Spec}\,}
\newcommand{\colim}{\operatorname*{colim}}
\newcommand{\hocolim}{\operatorname*{hocolim}}
\newcommand{\holim}{\operatorname*{holim}}
\newcommand{\Hom}{\mathrm{Hom}}
\newcommand{\Imm}{\mathrm{Im}\,}
\newcommand{\Sm}{\mathbf{Sm}}
\newcommand{\Smc}{\Sm_{\C}}
\newcommand{\Smb}{\overline{\Sm}}
\newcommand{\Smcnis}{\Sm_{\C,\mathrm{Nis}}}
\newcommand{\Manc}{\mathbf{Man}_{\C}}
\newcommand{\sPre}{\mathbf{sPre}}
\newcommand{\Tb}{\mathbf{T}}
\newcommand{\sPreTb}{\sPre(\Tb)}
\newcommand{\hosPreTb}{\mathrm{ho}\sPreTb}
\newcommand{\hosPre}{\mathrm{ho}\sPre}
\newcommand{\hosPresm}{\mathrm{ho}\sPresm}
\newcommand{\hosPresmc}{\mathrm{ho}\sPresmc}
\newcommand{\sPresm}{\sPre(\Sm)}
\newcommand{\sPresmb}{\sPre(\Smb)}
\newcommand{\sPresmc}{\sPre(\Sm_{\C})}
\newcommand{\sS}{\mathbf{sS}}
\newcommand{\Ch}{{\mathcal C}}
\newcommand{\Dh}{{\mathcal D}}
\newcommand{\Eh}{{\mathcal E}}
\newcommand{\Fh}{{\mathcal F}}
\newcommand{\Gh}{{\mathcal G}}
\newcommand{\Homm}{{\mathcal Hom}}
\newcommand{\Ph}{\mathcal{P}}
\newcommand{\Qh}{\mathcal{Q}}
\newcommand{\Uh}{{\mathcal U}}
\newcommand{\Vh}{{\mathcal V}}
\newcommand{\Wh}{\mathcal{W}}
\newcommand{\Xh}{\mathcal{X}}
\newcommand{\Yh}{\mathcal{Y}}
\newcommand{\Zh}{\mathcal{Z}}
\newcommand{\oA}{\bar{A}}
\newcommand{\oX}{\overline{X}}
\newcommand{\oY}{\overline{Y}}
\newcommand{\oOmega}{\overline{\Omega}}
\newcommand{\into}{\hookrightarrow}
\newtheorem{theorem}{Theorem}[section]
\newtheorem{lemma}[theorem]{Lemma}
\newtheorem{prop}[theorem]{Proposition}
\theoremstyle{definition}
\newtheorem{defn}[theorem]{Definition}
\newtheorem{remark}[theorem]{Remark}
\begin{document}

\title{An Abel-Jacobi invariant for cobordant cycles}

\author{Gereon Quick}
\thanks{The author was supported in part by the German Research Foundation under QU 317/1-2 and RO 4754/1-1.} 

\address{Department of Mathematical Sciences, NTNU, NO-7491 Trondheim, Norway}
\email{gereon.quick@math.ntnu.no}
\date{}

\begin{abstract}
We discuss an Abel-Jacobi invariant for algebraic cobordism cycles whose image in topological cobordism vanishes. The existence of this invariant follows by abstract arguments from the construction of Hodge filtered cohomology theories in joint work of Michael J.\,Hopkins and the author. In this paper, we give a concrete description of the Abel-Jacobi map and Hodge filtered cohomology groups for projective smooth complex varieties.  
\end{abstract}

\maketitle

\section{Introduction}

The Abel-Jacobi map $\Phi$ is a fundamental invariant which appears in various forms in algebraic geometry. Let us mention three important examples. For an elliptic curve $E$ over the complex numbers, $\Phi$ is the map that identifies the group of complex valued points $E(\C)$ with a complex torus $\C/\Lambda$, where $\Lambda$ is a lattice defined by the periods of $E$. For general smooth projective complex varieties, $\Phi$ is a fundamental tool in Lefschetz' proof of the Hodge conjecture for $(1,1)$-classes. 
In his seminal work \cite{griffiths}, Griffiths showed that the Abel-Jacobi map can be used to detect cycles which may have codimension bigger than one and are homologous to zero. 
According to Deligne, one way to define the Abel-Jacobi map is the following.  
Let $H^{2p}_{\Dh}(X; \Z(p))$ denote the $2p$th Deligne cohomology of $X$ with coefficients in $\Z(p)$, let $\Hdg^{2p}(X)$ be the group of integral Hodge classes in $H^{2p}(X; \Z)$, and let $CH^p(X)$ be the $p$th Chow group of cycles of codimension $p$ modulo rational equivalence. Let $CH_{\hom}^p(X)$ be the subgroup of $CH^p(X)$ of cycles which are homologous to zero. 
Then there is a commutative diagram 
\bq\label{chdiagram}
\xymatrix{
 & CH_{\hom}^p(X) \ar[d]_{\Phi} \ar[r] & CH^p(X) \ar[d]_{\cl_{\Dh}} \ar[dr]^{\cl} & & \\
0 \ar[r] & J^{2p-1}(X) \ar[r] & H_{\Dh}^{2p}(X; \Z(p)) \ar[r] & \Hdg^{2p}(X) \ar[r] & 0.}
\eq
The bottom row of this diagram is an exact sequence, and the homomorphism $\Phi$ is induced by the Deligne-cycle map $\cl_{\Dh}$ and the fact that $CH_{\hom}^p(X)$ is the kernel of the cycle map $\cl$. The group $J^{2p-1}(X)$ is the $p$th intermediate Jacobian of Griffiths defined by  
\[
J^{2p-1}(X) = H^{2p-1}(X; \C)/\left( F^pH^{2p-1}(X; \C) + H^{2p-1}(X; \Z)\right).
\]

The purpose of this paper is to study an analog of the Abel-Jacobi map when we replace the role of Chow groups and cohomology with algebraic and complex cobordism, respectively. In \cite{lm}, Levine and Morel constructed algebraic cobordism as the universal oriented cohomology theory on the category $\Sm_k$ of smooth quasi-projective schemes over a field $k$ of characteristic zero. For $X\in \Sm_k$, the algebraic cobordism ring of $X$ is denoted by $\Omega^*(X)$. For a given $p\ge 0$, $\Omega^p(X)$ is generated by prime cycles of the form $f \colon Y \to X$ where $Y$ is a smooth scheme over $k$ and $f$ is a projective $k$-morphism. In the case $k=\C$, taking complex points induces a natural homomorphism of rings 
\[
\varphi_{MU} \colon \Omega^*(X) \to MU^{2*}(X) := MU^{2*}(X(\C))
\]               
to the complex cobordism, represented by the Thom spectrum $MU$, of the space of complex points $X(\C)$. 

More recently, Michael J.\,Hopkins and the author \cite{hfcbordism} constructed natural generalizations of Deligne-Beilinson cohomology on $\Smc$ for any topological spectrum $E$. We remark that a version of Hodge filtered complex $K$-theory had already been defined and studied by Karoubi in \cite{mk43} and \cite{mk45}. 

For $E=MU$, we obtain logarithmic Hodge filtered complex bordism groups. For $n, p \in \Z$ and $X \in \Smc$, they are denoted by $\MUlog^n(p)(X)$. Taking the sum over all $n$ and $p$, $\MUlog$ is equipped with a ring structure. Moreover, it was shown in \cite[\S 7.2]{hfcbordism} that $\MUlog^{2*}(*)(-)$ defines an oriented cohomology theory on $\Smc$. The universality of $\Omega^*(-)$ (proven in \cite{lm}) then implies that, for every $X \in \Smc$, there is a natural ring homomorphism 
\[
\varphi_{\MUlog} \colon \Omega^*(X) \to \MUlog^{2*}(*)(X).
\]

Furthermore, we can generalize diagram \eqref{chdiagram} in the following way.
For a given $p$, let $\Omega_{\topp}^p(X)$ be the kernel of the map $\varphi_{MU}$. Then for every smooth projective complex variety $X$, there is a natural commutative diagram   
\begin{equation}\label{omdiagram}
\xymatrix{
 & \Omtop^p(X) \ar[d]_{\Phi_{MU}} \ar[r] & \Omega^p(X) \ar[d]_{\varphi_{\MUlog}} \ar[dr]^{\varphi_{MU}} & & \\
0 \ar[r] & J_{MU}^{2p-1}(X) \ar[r] & \MUlog^{2p}(p)(X) \ar[r] & \HdgMU^{2p}(X) \ar[r] & 0.}
\end{equation}
The bottom row of this diagram is again exact. The group $\HdgMU^{2p}(X)$ is the subgroup of elements in $MU^{2p}(X)$ which are mapped to Hodge classes in cohomology. 
The group $J^{2p-1}_{MU}(X)$ is a complex torus which is determined by the Hodge structure of the cohomology and the complex cobordism of $X$. We consider $J^{2p-1}_{MU}(X)$ as a natural generalization of Griffiths' intermediate Jacobian. The map $\Phi_{MU}$ is induced by $\varphi_{\MUlog}$ and can be considered as an analog of the Abel-Jacobi map. 

By its definition via diagram \eqref{omdiagram}, $\Phi_{MU}$ can be used to detect algebraic cobordism cycles which are topologically cobordant. The main goal of this paper is to describe how one can associate to an element in $\Omtop^p(X)$ an element in $J_{MU}^{2p-1}(X)$. We will achieve this goal by providing a concrete description of the elements in Hodge filtered cohomology groups for {\it projective} smooth complex varieties.

We would like to add a few more words on the relationship between diagrams \eqref{chdiagram} and \eqref{omdiagram}. By \cite{lm} and \cite{hfcbordism}, there is a natural commutative diagram 
\bq\label{commsquareintro}
\xymatrix{
\Omega^p(X)\ar[d]_{\theta} \ar[r] & \MUlog^{2p}(p)(X) \ar[d]_{\vartheta_{\log}} \ar[r] & MU^{p}(X) \ar[d]_{\vartheta} \\
CH^p(X) \ar[r] & H^{2p}_{\Dh}(X; \Z(p)) \ar[r] & H^{2p}(X; \Z)}
\eq
The composite $\Omega^p(X) \to H^{2p}(X; \Z)$ is the canonical homomorphism induced by the transformation of oriented cohomology theories. Let $\Omega^p_{\hom}(X)$ be the subgroup of elements in $\Omega^p(X)$ which are mapped to zero under $\Omega^p(X) \to H^{2p}(X; \Z)$. It is clear from diagram \eqref{commsquareintro} that we have a natural inclusion 
\[
\Omtop^p(X) \subset \Omega^p_{\hom}(X).
\]  
We also have an induced map $\theta_{\hom} \colon \Omega^p_{\hom}(X) \to CH^p_{\hom}(X)$. An Abel-Jacobi map for $\Omega^p(X)$ which would correspond to the canonical map $\Omega^p(X) \to H^{2p}(X; \Z)$ would factor through $\theta_{\hom}$. But $\theta_{\hom}$ has a huge kernel to which such a map would be insensitive. The map $\Phi_{MU}$ however is a much finer invariant for algebraic cobordism. 

In fact, $\Phi_{MU}$ is able to detect at least some elements in the kernel of $\theta$. In \cite[\S 7.3]{hfcbordism}, we considered simple examples of algebraic cobordism classes in $\Omtop^*(X)$ which lie in the kernel of $\theta$. These classes can be obtained from the cycles which Griffiths constructed in \cite{griffiths} to show that the Griffiths group can be infinite. We hope  that this paper will help finding new types of examples.

Now let $E$ be another complex oriented cohomology theory and let $E^*_{\log}(*)$ be the associated logarithmic Hodge filtered cohomology theory. Let us assume that $E$ is equipped with maps $MU \to E \to H\Z$. This data induces an intermediate row in diagram \eqref{commsquareintro}. The natural map $\Omega^p(X) \to \Elog^{2p}(p)(X)$ factors through a quotient of $\Omega^p(X)$ which is determined by the formal group law of $E$. This yields the subgroup $\Omega^p_{E, \topp}(X)$ of those elements in $\Omega^p(X)$ which vanish under the natural map 
\[
\Omega^p(X) \to E^{2p}(X)=E^{2p}(X(\C)).
\] 
This subgroup satisfies 
\[
\Omtop^p(X) \subseteq \Omega^p_{E, \topp}(X) \subseteq \Omega^p_{\hom}(X).
\]
We do not discuss the difference of these groups in the present paper. However, we would like to remark that there is another interesting subgroup of $\Omega^p(X)$ which is given by algebraic cobordism cycles which are algebraically equivalent to zero in the sense of the work of Krishna and Park in \cite{kp}. This group sits between $\Omega^p_{\topp}(X)$ and $\Omega^p_{\hom}(X)$. 
Moreover, we expect the topological triviality relation we consider in this paper to be strictly coarser than the one in \cite{kp}. It would be very interesting to understand the difference between these relations in more detail. \\

We will now give a brief overview of the organization of the paper. 
In order to understand the map $\Phi_{MU}$ we provide a concrete representation of elements in logarithmic Hodge filtered cohomology theories which may be of independent interest. An element in $\MUlog^n(p)(X)$ can be represented by pairs of elements consisting of holomorphic forms and cobordism elements which are connected by a homotopy. This resembles the way one can view elements in Deligne cohomology for complex manifolds (see e.g.\,\cite{ev} or \cite{voisin1}) and elements in differential cohomology theories for smooth manifolds as in \cite{hs}. In order to obtain this representation we first discuss some facts about homotopy pullbacks for simplicial presheaves. 
Then we define logarithmic Hodge filtered spaces and study their global sections for smooth projective varieties. The above mentioned representation is then an immediate consequence of the construction. In the fourth section we use this representation to describe the Abel-Jacobi invariant for topologically trivial algebraic cobordism cycles. \\

{\bf Acknowledgements.} We are very grateful to Aravind Asok, Dustin Clausen, Mike Hopkins, Marc Levine and Kirsten Wickelgren for many helpful discussions. We would also like to thank the anonymous referee of the first version of this paper for very helpful suggestions and comments.


\section{Homotopy pullbacks of simplicial presheaves}

We briefly recall some basic facts about simplicial presheaves. Then we will discuss the construction of homotopy pullbacks of simplicial presheaves which will be needed in the next section.

\subsection{Simplicial presheaves}

Let $\Tb$ be an essentially small site with enough points. The following two examples of such sites  will occur in this paper: 

\begin{itemize}
\item The category $\Manc$ of complex manifolds and holomorphic maps which we consider as a site with the Grothendieck topology defined by open coverings.  

\item The category $\Smcnis=\Sm$ of smooth complex algebraic varieties (separated schemes of finite type over $\C$) with the Nisnevich topology. 
We recall that a distinguished square in $\Smcnis$ is a cartesian square of the form 
\begin{equation}\label{dsquare}
\xymatrix{U\times_XV\ar[r] \ar[d] & V \ar[d]^p \\
U \ar[r]^j & X}
\end{equation}
such that $p$ is an \'etale morphism, $j$ is an open embedding and the induced morphism $p^{-1}(X-U)\to X-U$ is an isomorphism, where the closed subsets are equipped with the reduced induced structure. The Nisnevich topology is the Grothendieck topology generated by coverings of the form \eqref{dsquare} (see \cite[\S 3.1]{mv}).
\end{itemize}

We denote by $\sPre=\sPreTb$ the category of simplicial presheaves on $\Tb$, i.e., contravariant functors from $\Tb$ to the category $\sS$ of simplicial sets. Objects in $\sPre$ will also be called spaces. There are several important model structures on the category $\sPre$ (see \cite{jardine}, \cite{blander}, \cite{dugger}).

We start with the projective model structure on $\sPre$. A map $\Fh \to \Gh$ in $\sPre$ is called  
\begin{itemize}
\item an objectwise weak equivalence if $\Fh(X) \to \Gh(X)$ is a weak equivalence in $\sS$ (equipped with the standard model structure) for every $X\in \Tb$;
\item a projective fibration if $\Fh(X) \to \Gh(X)$ is a Kan fibration in $\sS$ for every $X\in \Tb$;
\item a projective cofibration if it has the left lifting property with respect to all acyclic fibrations.
\end{itemize}

In order to obtain a local model structure, i.e., one which respects the topology on the site $\Tb$, we can localize the projective model structure at the hypercovers in $\Tb$ (see \cite{jardine}, \cite{blander}, \cite{dugger}). 
We briefly recall the most important notions. A map $f: \Fh \to \Gh$ of presheaves on $\Tb$ is called a {\it generalized cover} if for any map $X\to \Fh$ from a representable presheaf $X$ to $\Fh$ there is a covering sieve $R \into X$ such that for every element $U \to X$ in $R$ the composite $U\to X \to \Fh$ lifts through $f$. 

Dugger and Isaksen \cite[\S 7]{di} give the following characterization of local acyclic fibrations and hypercovers. A map $f: \Fh \to \Gh$ of simplicial presheaves on $\Tb$ is a {\it local acyclic fibration} if for every $X\in \Tb$ and every commutative diagram 
\[
\xymatrix{
\partial\Delta^n \otimes X \ar[r] \ar[d] & \Fh \ar[d]\\
\Delta^n\otimes X \ar[r] & \Gh}
\]
there exists a covering sieve $R\into X$ such that for every $U\to X$ in $R$, the diagram one obtains from restricting from $X$ to $U$ 
\[
\xymatrix{
\partial\Delta^n \otimes U \ar[r] \ar[d] & \Fh \ar[d]\\
\Delta^n\otimes U \ar@{.>}[ur] \ar[r] & \Gh}
\]
has a lifting $\Delta^n\otimes U \to \Fh$. Note that this implies in particular that the map $\Fh_0 \to \Gh_0$ of presheaves is a generalized cover.

\begin{defn}
Let $X$ be an object of $\Tb$ and let $\Uh$ be a simplicial presheaf on $\Tb$ with an augmentation map $\Uh \to X$ in $\sPre$. This map is called a {\it hypercover} of $X$ if it is a local acyclic fibration and each $\Uh_n$ is a coproduct of representables. 
\end{defn}

If $\Uh \to X$ is a hypercover, then the map $\Uh_0 \to X$ is a cover in the topology on $\Tb$. Moreover, the map $\Uh_1 \to \Uh_0 \times_X \Uh_0$ is a generalized cover. In general, for each $n$, the face maps combine such that $\Uh_n$ is a generalized cover of a finite fiber product of different $\Uh_k$ with $k < n$. 

Since the projective model structure on $\sPre$ is cellular, proper and simplicial, it admits a left Bousfield localization with respect to all maps 
\[
\{\hocolim \Uh_* \to X\}
\] 
where $X$ runs through all objects in $\Tb$ and $\Uh$ runs through the hypercovers of $X$. The resulting model structure is the {\it local projective model structure} on $\sPre$ (see \cite{blander} and \cite{dugger}). The weak equivalences, fibrations and cofibrations in the local projective model structure are called {\it local weak equivalences}, {\it local projective fibrations} and {\it local projective cofibrations}, respectively. We denote the corresponding homotopy category by $\hosPre$. Note that the local weak equivalences are precisely those maps $\Fh \to \Gh$ in $\sPre$ such that the induced map of stalks $\Fh_x \to \Gh_x$ is a weak equivalence in $\sS$ for every point $x$ in $\Tb$.

Dugger, Hollander and Isaksen showed that the fibrations in the local projective model structure on $\sPre$ have a nice characterization (see \cite[\S\S 3+7]{dhi}). Let $\Uh \to X$ be a hypercover in $\sPre$ and let $\Fh$ be a projective fibrant simplicial presheaf. Since each $\Uh_n$ is a coproduct of representables, we can form a product of simplicial sets $\prod_a \Fh((U_n)^a)$ where $a$ ranges over the representable summands of $\Uh_n$. The simplicial structure of $\Uh$ defines a cosimplicial diagram in $\sS$  
\[
\prod_a \Fh(U_0^a) \rightrightarrows \prod_a \Fh(U_1^a) \overset{\longrightarrow}{\underset{\longrightarrow}{\longrightarrow}} \cdots 
\]
The homotopy limit of this diagram is denoted by $\holim_{\Delta}\Fh(\Uh)$.

Following \cite[Definition 4.3]{dhi} we say that a simplicial presheaf $\Fh$ satisfies \emph{descent for a hypercover $\Uh \to X$} if there is a projective fibrant replacement $\Fh \to \Fh'$ such that the natural map 
\begin{equation}\label{descentmap}
\Fh'(X) \to \holim_{\Delta}\Fh'(\Uh)
\end{equation}
is a weak equivalence. It is easy to see that if $\Fh$ satisfies descent for a hypercover $\Uh \to X$, then the map \eqref{descentmap} is a weak equivalence for {\it every} projective fibrant replacement $\Fh \to \Fh'$. By \cite[Corollary 7.1]{dhi}, the local projective fibrant objects in $\sPre$ are exactly those simplicial presheaves which are projective fibrant and satisfy descent with respect to all hypercovers $\Uh \to X$. For our final applications we will need the following facts whose proofs can be found in \cite{compact}.

\begin{lemma}\label{descent}
Let $\Fh$ be a simplicial presheaf that satisfies descent with respect to all hypercovers. Then every fibrant replacement $\Fh \to \Fh_f$ in the local projective model structure is an objectwise weak equivalence, i.e., for every object $X\in \Tb$ the map 
\[
\Fh(X) \to \Fh_f(X)
\]
is a weak equivalence of simplicial sets.
\end{lemma}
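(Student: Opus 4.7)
The plan is to reduce the statement to two standard ingredients: the characterization of local projective fibrant objects recalled just before the lemma, namely \cite[Corollary 7.1]{dhi}, and the general principle that in a left Bousfield localization a weak equivalence between local fibrant objects coincides with a weak equivalence in the underlying model structure.

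First I would take a functorial projective fibrant replacement $\Fh \to \Fh'$, which is by definition an objectwise weak equivalence and hence in particular a local weak equivalence. The descent property transfers across this map: since $\Fh$ satisfies descent for every hypercover and (as noted in the paper) the descent condition is independent of the choice of projective fibrant replacement, $\Fh'$ itself also satisfies descent with respect to all hypercovers. Being both projective fibrant and descent-satisfying, $\Fh'$ is already fibrant in the local projective model structure by \cite[Corollary 7.1]{dhi}.

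Next I would compare $\Fh'$ with the given local fibrant replacement $\Fh_f$. Applying functorial factorization in the projective model structure to the composite $\Fh \to \Fh_f$, I factor it as $\Fh \to \widetilde{\Fh} \to \Fh_f$ where the first map is an objectwise acyclic cofibration and the second is a projective fibration. Then $\widetilde{\Fh}$ is projective fibrant (since $\Fh_f$ is) and, by the same transfer argument as above, also satisfies descent, hence is local projective fibrant by \cite[Corollary 7.1]{dhi}. Two-out-of-three in the local model structure now shows that the induced map $\widetilde{\Fh} \to \Fh_f$ is a local weak equivalence between two local projective fibrant objects. Invoking the Bousfield localization principle, it must therefore be an objectwise weak equivalence. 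Composing with the objectwise weak equivalence $\Fh \to \widetilde{\Fh}$ yields the conclusion.

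The main technical obstacle is justifying the final step, namely that a local weak equivalence between local projective fibrant objects is automatically an objectwise weak equivalence. This relies on the general fact that, for local objects in a left Bousfield localization, the derived mapping spaces agree with those computed in the unlocalized structure; evaluating at representables (which are projective cofibrant) then recovers the objectwise comparison. All remaining steps are formal consequences of the model category axioms, the cited characterization \cite[Corollary 7.1]{dhi}, and the descent hypothesis on $\Fh$.
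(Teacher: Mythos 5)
Your proof is correct and follows essentially the same route as the argument the paper points to (the lemma is not proved in this paper but deferred to \cite{compact}): factor $\Fh \to \Fh_f$ projectively through a projective fibrant $\widetilde{\Fh}$, note that descent transfers so $\widetilde{\Fh}$ is local projective fibrant by \cite[Corollary 7.1]{dhi}, and conclude because a local weak equivalence between local fibrant objects is a weak equivalence in the underlying projective structure, hence objectwise. Two small remarks: the auxiliary replacement $\Fh'$ in your first step is redundant, since only $\widetilde{\Fh}$ is used, and the final principle needs no re-derivation via mapping spaces out of representables — it is exactly \cite[Theorem 3.2.13]{hirschhorn} applied to the left Bousfield localization defining the local projective structure.
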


\begin{prop}\label{cordescent}
Let $\Fh$ be a simplicial presheaf that satisfies descent with respect to all hypercovers and let $X$ be an object of $\Tb$. Then, for every projective fibrant replacement $g:\Fh \to \Fh'$, the natural map 
\[
\Hom_{\hosPre}(X, \Fh) \to \pi_0(\Fh'(X))\]
is a bijection.  
\end{prop}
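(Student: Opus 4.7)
The plan is to compute $\Hom_{\hosPre}(X,\Fh)$ via a two-step fibrant replacement and identify the resulting simplicial mapping space with $\Fh'(X)$ using the simplicial Yoneda lemma.

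First I would note that a representable presheaf $X \in \Tb$ (viewed as a discrete simplicial presheaf) is cofibrant in the projective model structure, and since the local projective model structure is a left Bousfield localization of the projective one, the two classes of cofibrations coincide. Thus $X$ is also cofibrant in the local projective model structure. Consequently, to compute $\Hom_{\hosPre}(X,\Fh)$ it suffices to find a local projective fibrant replacement of $\Fh$ and take $\pi_0$ of the simplicial mapping space out of $X$.

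Next I would construct such a fibrant replacement in two stages. Starting with the given projective fibrant replacement $g : \Fh \to \Fh'$, which is in particular an objectwise weak equivalence, I would then choose a local projective fibrant replacement $h : \Fh' \to \Fh'_{\loc}$. The composite $h \circ g$ is a local projective fibrant replacement of $\Fh$, so that
\[
\Hom_{\hosPre}(X,\Fh) \;\cong\; \pi_0\bigl(\Map(X, \Fh'_{\loc})\bigr) \;\cong\; \pi_0\bigl(\Fh'_{\loc}(X)\bigr),
\]
where the last isomorphism is the simplicial Yoneda lemma: for any representable $X$ and any simplicial presheaf $\Gh$, evaluation at the identity induces an isomorphism $\Map(X,\Gh) \cong \Gh(X)$.

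It remains to compare $\Fh'_{\loc}(X)$ with $\Fh'(X)$. Since $\Fh \to \Fh'$ is an objectwise weak equivalence and descent with respect to hypercovers is an objectwise-weak-equivalence invariant property (both sides of the map \eqref{descentmap} are homotopy invariant in $\Fh$), the presheaf $\Fh'$ again satisfies descent for all hypercovers. Applying Lemma \ref{descent} to $\Fh'$ then tells us that the local projective fibrant replacement $h : \Fh' \to \Fh'_{\loc}$ is an objectwise weak equivalence. Evaluating at $X$ and passing to $\pi_0$ yields
\[
\pi_0\bigl(\Fh'(X)\bigr) \;\xrightarrow{\;\cong\;}\; \pi_0\bigl(\Fh'_{\loc}(X)\bigr) \;\cong\; \Hom_{\hosPre}(X,\Fh),
\]
and unwinding the identifications shows that this isomorphism agrees with the natural map in the statement. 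The only nontrivial point is verifying that the descent property passes from $\Fh$ to $\Fh'$, but this is immediate once one recalls that if $\Fh \to \Fh'$ is an objectwise equivalence then the induced map on the homotopy limits $\holim_\Delta \Fh(\Uh) \to \holim_\Delta \Fh'(\Uh)$ is also a weak equivalence, so descent transfers along $g$.
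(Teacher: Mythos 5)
Your argument is correct, and it is essentially the standard one; the paper itself only cites \cite{compact} for this fact, so there is no in-text proof to compare against line by line. All the ingredients you use are sound: representables are projective cofibrant (they are pushouts of the generating cofibration $\partial\Delta^0\otimes X\to\Delta^0\otimes X$), left Bousfield localization does not change cofibrations, the mapping space out of a cofibrant object into a local projective fibrant object computes $\Hom_{\hosPre}$, and the simplicial Yoneda lemma identifies $\Map(X,\Gh)$ with $\Gh(X)$. Your transfer of descent from $\Fh$ to $\Fh'$ is also fine, and in fact is immediate from the paper's own remark that descent holds for \emph{every} projective fibrant replacement: since $\Fh'$ is projective fibrant, the identity serves as its fibrant replacement, so $\Fh'(X)\to\holim_\Delta\Fh'(\Uh)$ is an equivalence for all hypercovers. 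The only simplification worth pointing out is that you do not really need the second replacement $\Fh'\to\Fh'_{\loc}$ together with Lemma \ref{descent}: by \cite[Corollary 7.1]{dhi}, quoted just before the statement, a projective fibrant presheaf satisfying descent is already local projective fibrant, so $\Fh'$ itself is a local projective fibrant replacement of $\Fh$ and one gets $\Hom_{\hosPre}(X,\Fh)\cong\pi_0\bigl(\Map(X,\Fh')\bigr)\cong\pi_0(\Fh'(X))$ directly; your two-step route buys nothing extra but costs the (correctly handled) verification that $h$ is an objectwise equivalence.
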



\subsection{Homotopy pullbacks of simplicial presheaves}

We briefly recall the construction of homotopy pullbacks in $\sPre$ (see \cite[\S 13.3]{hirschhorn} for more details) and will then show that its local and global versions are homotopy equivalent. 

Let $\sPre$ be equipped with any of the above model structures. We fix a functorial factorization $\Eh$ of every map $f \colon \Xh \to \Yh$ into 
\[
\Xh \xrightarrow{i_f} \Eh(f) \xrightarrow{p_f} \Yh
\]
where $i_f$ is an acyclic cofibration and $p_f$ is a fibration. The {\em homotopy pullback} of the diagram 
$\Xh \xrightarrow{f} \Zh \xleftarrow{g} \Yh$ 
is the pullback of  
$\Eh(f) \xrightarrow{p_f} \Zh \xleftarrow{p_g} \Eh(g)$. The homotopy pullback satisfies the following invariance. If we have a diagram 
\[
\xymatrix{
\Xh \ar[d] \ar[r]^f & \Zh \ar[d] & \ar[l]_g \Yh \ar[d] \\
\Xh' \ar[r]^-{f'} & \Zh' & \ar[l]_{g'} \Yh' }
\]
in which the three vertical maps are weak equivalences, then the induced map of homotopy pullbacks 
\[
\Eh(f) \times_{\Zh} \Eh(g) \to \Eh(f') \times_{\Zh'} \Eh'(g')
\]
is a weak equivalence as well.

We will need the following fact about pulling back local weak equivalences along maps which are merely projective fibrations.  

\begin{lemma}\label{localproper}
Let $f \colon \Xh \to \Zh$ be a projective fibration and $g \colon \Yh \to \Zh$ be a local weak equivalence in $\sPre$. Then the induced map $f' \colon \Xh \times_{\Zh} \Yh \to \Xh$ is a local weak equivalence as well.
\end{lemma}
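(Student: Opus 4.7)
The plan is to reduce everything to stalks. Since the site $\Tb$ has enough points, a map in $\sPre$ is a local weak equivalence if and only if it induces a weak equivalence on all stalks (as noted in the paper just after the definition of the local projective model structure). So it suffices to show that, for every point $x$ of $\Tb$, the stalk map $(f')_x \colon (\Xh \times_{\Zh} \Yh)_x \to \Xh_x$ is a weak equivalence of simplicial sets.

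The first key ingredient is that stalks are computed as filtered colimits of sections, hence commute with finite limits. Therefore
\[
(\Xh \times_{\Zh} \Yh)_x \;\cong\; \Xh_x \times_{\Zh_x} \Yh_x,
\]
and the map $(f')_x$ is the base change of $f_x \colon \Xh_x \to \Zh_x$ along $g_x \colon \Yh_x \to \Zh_x$. By hypothesis, $g_x$ is a weak equivalence. The second key ingredient is that $f_x$ is a Kan fibration: projective fibrations are objectwise Kan fibrations, and filtered colimits of Kan fibrations in $\sS$ remain Kan fibrations because the horn inclusions $\Lambda^n_k \hookrightarrow \Delta^n$ are compact (finite).

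Finally, the standard model structure on $\sS$ is right proper, so the pullback of the weak equivalence $g_x$ along the Kan fibration $f_x$ is again a weak equivalence. Hence $(f')_x$ is a weak equivalence for every point $x$, and so $f'$ is a local weak equivalence. The only mildly technical step is checking that the stalkwise fibration property holds, but this reduces to the compactness of simplicial horns and is standard; once that is in hand, right properness of $\sS$ delivers the conclusion.
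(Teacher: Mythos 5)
Your proof is correct and follows essentially the same route as the paper: reduce to stalks (which commute with finite limits), invoke right properness of $\sS$, and verify that a projective fibration is a stalkwise Kan fibration. The only cosmetic difference is in that last verification, where you argue directly via compactness of the horn inclusions under filtered colimits, while the paper packages the same finiteness argument through the hom-presheaves $\Homm(K,-)$ and the fact that taking stalks preserves objectwise epimorphisms.
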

\begin{proof}
For a point $x$ of $\sPre$ and a map $h \colon \Vh \to \Wh$ of simplicial presheaves, let $h_x \colon \Vh_x \to \Wh_x$ denote the induced map of stalks at $x$. With the notation of the lemma, we need to show that $f'_x$ is a weak equivalence in $\sS$ for every point $x$ in $\Tb$. Since $x$ preserves finite limits, we have $(\Xh \times_{\Zh} \Yh)_x = \Xh_x \times_{\Zh_x} \Yh_x$ and $f'_x$ equals the induced map in the corresponding pullback diagram in $\sS$. Since the standard model structure on $\sS$ is right proper, it thus suffices to show that $g_x$ is a Kan fibration. But every map in $\sPre$ which is an objectwise Kan fibration is also a stalkwise Kan fibration. 
We will provide a proof of this fact for completeness. Given a point $x$ of $\Tb$, we need to check that the map of sets induced by $g$ 
\bq\label{stalk1}
\Hom_{\sS}(\Delta[n], \Yh_x) \to \Hom_{\sS}(\Lambda_k[n], \Yh_x) \times_{\Hom_{\sS}(\Lambda_k[n], \Zh_x)} \Hom_{\sS}(\Delta[n], \Zh_x) 
\eq
is surjective for all $n\ge 1$ and $0 \le k \le n$. Now for a simplicial presheaf $\Wh$ and a finite simplicial set $K$, we can consider the functor $X \mapsto \Hom_{\sS}(K, \Wh(X))$ as a presheaf of sets on $\Tb$. We denote this presheaf by $\Homm(K, \Wh)$. The stalk of this presheaf at $x$ is exactly the set $\Hom_{\sS}(K, \Wh_x)$. Hence the map \eqref{stalk1} is surjective if and only if the map of presheaves of sets 
\bq\label{stalk2}
\Homm(\Delta[n], \Yh) \to \Homm(\Lambda_k[n], \Yh) \times_{\Homm(\Lambda_k[n], \Zh)} \Homm(\Delta[n], \Zh)
\eq
induces a surjective map of stalks at $x$. But, since $g$ is a projective fibration, $g(X)$ is a Kan fibration for every object $X \in \Tb$. Hence, by the definition of $\Homm(-,-)$, the induced map 
\bq\label{stalk3}
\Homm(\Delta[n], \Yh)(X) \to \Homm(\Lambda_k[n], \Yh)(X) \times_{\Homm(\Lambda_k[n], \Zh)(X)} \Homm(\Delta[n], \Zh)(X)
\eq
is surjective. Since forming stalks preserves objectwise epimorphisms, this implies that $g_x$ is a Kan fibration.  
\end{proof}

The following result is probably a well-known fact. We include its proof for completeness and lack of a reference.

\begin{lemma}\label{htpypblemma}
The homotopy pullback of a diagram in $\sPre$ in the projective model structure is stalkwise weakly equivalent to the homotopy pullback of the diagram in the {\em local} projective model structure.  
\end{lemma}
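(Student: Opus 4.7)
The plan is to produce an explicit comparison map between the two homotopy pullbacks and then apply Lemma \ref{localproper} twice. Given a diagram $\Xh \xrightarrow{f} \Zh \xleftarrow{g} \Yh$, I would first use the functorial projective factorization $\Eh$ to form the projective homotopy pullback
\[
P_p := \Eh(f) \times_{\Zh} \Eh(g),
\]
so that both maps $\Eh(f) \to \Zh$ and $\Eh(g) \to \Zh$ are projective fibrations by construction.

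Next, I would refactor each of these projective fibrations in the local projective model structure as $\Eh(f) \to \tilde{\Eh}(f) \to \Zh$ and $\Eh(g) \to \tilde{\Eh}(g) \to \Zh$, where the first maps are local acyclic cofibrations and the second are local projective fibrations. Since every projective weak equivalence is in particular a local weak equivalence, the composites $\Xh \to \tilde{\Eh}(f)$ and $\Yh \to \tilde{\Eh}(g)$ are themselves local acyclic cofibrations, and hence $\tilde{\Eh}(f) \times_{\Zh} \tilde{\Eh}(g)$ is a valid model for the local projective homotopy pullback of the diagram. By the invariance property for homotopy pullbacks recalled just before the lemma, this model is locally weakly equivalent to the canonical one $P_l$.

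It therefore suffices to show that the natural map
\[
P_p = \Eh(f) \times_{\Zh} \Eh(g) \longrightarrow \tilde{\Eh}(f) \times_{\Zh} \tilde{\Eh}(g)
\]
is a local weak equivalence. I would decompose it through the intermediate object $\tilde{\Eh}(f) \times_{\Zh} \Eh(g)$. Each of the two resulting arrows fits into a cartesian square in which one edge is one of the local weak equivalences $\Eh(f) \to \tilde{\Eh}(f)$ or $\Eh(g) \to \tilde{\Eh}(g)$, while the transverse edge is a projection from a fiber product of the form $\tilde{\Eh}(f) \times_{\Zh} \Eh(g)$ or $\tilde{\Eh}(f) \times_{\Zh} \tilde{\Eh}(g)$ onto one of its factors; these projections are base changes of the projective fibrations $\Eh(g) \to \Zh$ and $\tilde{\Eh}(f) \to \Zh$ (the latter being a local fibration and hence, by the Bousfield localization, a projective fibration as well), so they are themselves projective fibrations. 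Lemma \ref{localproper} then applies to each square and shows that each arrow, being the base change of a local weak equivalence along a projective fibration, is itself a local weak equivalence. The composite is therefore a local weak equivalence, and since the local weak equivalences in $\sPre$ are by definition precisely the stalkwise weak equivalences of simplicial presheaves, this proves the lemma. The only nontrivial step is the book-keeping in the decomposition, where one must identify each intermediate arrow as a pullback of exactly the form required by Lemma \ref{localproper}; once this is unraveled the argument is purely formal.
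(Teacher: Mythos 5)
Your argument is correct and is essentially the paper's own proof: you factor the projective fibrations $p_f^{\proj}, p_g^{\proj}$ further in the local projective model structure (your $\tilde{\Eh}(f), \tilde{\Eh}(g)$ are the paper's $\Eh^{\loc}(p_f^{\proj}), \Eh^{\loc}(p_g^{\proj})$), note that the composite acyclic cofibrations make the resulting fiber product a model for the local homotopy pullback, and then split the comparison map through the mixed fiber product, applying Lemma \ref{localproper} twice exactly as in the paper. No gaps; the bookkeeping you describe is precisely what the paper carries out.
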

\begin{proof}
Let $\Xh \xrightarrow{f} \Zh \xleftarrow{g} \Yh$ be a diagram in $\sPre$. Consider the diagram 
\bq\label{htpycompare}
\xymatrix{
\Xh \ar[d]^{i_f^{\proj}} \ar[r]^f & \Zh \ar@{=}[d] & \ar[l]_g \Yh \ar[d]^{i_g^{\proj}} \\
\Eh^{\proj}(f) \ar[d] \ar[r]^-{p_f^{\proj}} & \Zh \ar@{=}[d] & \ar[l]_{p_g^{\proj}} \Eh^{\proj}(g) \ar[d] \\
\Eh^{\loc}(p_f^{\proj}) \ar[r] & \Zh & \ar[l] \Eh^{\loc}(p_g^{\proj}) }
\eq
where the superscripts \emph{proj} and \emph{loc} indicate whether we take replacements in the projective and local projective model structure, respectively. We denote the projective homotopy pullback of the initial diagram by $\Ph^{\proj} = \Eh^{\proj}(f)\times_{\Zh} \Eh^{\proj}(g)$. The vertical maps $i_f^{\proj}$ and $i_g^{\proj}$ are projective acyclic cofibrations. Since left Bousfield localization does not change cofibrations and since objectwise weak equivalences are in particular local weak equivalences, $i_f^{\proj}$ and $i_g^{\proj}$ are local acyclic cofibrations as well. Hence the composition of the vertical maps in \eqref{htpycompare} are local acyclic cofibrations. Thus, $\Eh^{\loc}(p_f^{\proj}) \times_{\Zh} \Eh^{\loc}(p_g^{\proj})$ computes the local homotopy pullback $\Ph^{\loc}$ of $\Xh \xrightarrow{f} \Zh \xleftarrow{g} \Yh$. 
Hence we need to show that the induced map 
\bq\label{Pprojtoloc}
\Eh^{\proj}(f) \times_{\Zh} \Eh^{\proj}(g) \to \Eh^{\loc}(p_f^{\proj}) \times_{\Zh} \Eh^{\loc}(p_g^{\proj}) 
\eq
is a local weak equivalence. This map equals the composition 
\bq\label{twomaps}
\Eh^{\proj}(f) \times_{\Zh} \Eh^{\proj}(g) \to \Eh^{\loc}(p_f^{\proj}) \times_{\Zh} \Eh^{\proj}(g) \to \Eh^{\loc}(p_f^{\proj}) \times_{\Zh} \Eh^{loc}(p_g^{\proj}).
\eq
Hence in order to show that \eqref{Pprojtoloc} is a local weak equivalence, it suffices to show that the two maps in \eqref{twomaps} are both local weak equivalences. For this, it suffices to show that the pullback of a local weak equivalence along a projective fibration is again a local weak equivalence which has been checked in Lemma \ref{localproper}. 
\end{proof}

\begin{remark}
The result of Lemma \ref{htpypblemma} does not depend on the fact that we use the projective model structure. The same proof (after replacing the superscript \emph{proj} with \emph{inj}) would also work if we used the injective model structure on $\sPre$. More precisely, the homotopy pullback of a diagram in $\sPre$ in the injective model structure is stalkwise weakly equivalent to the homotopy pullback of the diagram in the local injective model structure. 
\end{remark}

The lemma shows that we can calculate the set of homotopy classes of maps into a homotopy pullback via global sections in the following way.  

\begin{prop}\label{htpypbprop}
Let $\Xh \xrightarrow{f} \Zh \xleftarrow{g} \Yh$ be a diagram in $\sPre$, and let $\Ph$ denote the homotopy pullback of this diagram in the local projective model structure. We assume that all three simplicial presheaves $\Xh$, $\Yh$ and $\Zh$ satisfy descent for all hypercovers. For an object $X \in \Tb$, let $\Qh(X)$ denote the homotopy pullback in $\sS$ of the diagram of simplicial sets $\Xh(X) \xrightarrow{f(X)} \Zh(X) \xleftarrow{g(X)} \Yh(X)$. 
Then there is a natural bijection for every $X \in \Tb$ 
\[
\Hom_{\hosPre}(X, \Ph) \cong \pi_0(\Qh(X)). 
\]
\end{prop}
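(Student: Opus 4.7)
The plan is to work in the projective model structure, where a homotopy pullback whose legs are projective fibrations can be computed as a strict pullback; this makes it easy to read off global sections at $X$. The key tool is Proposition \ref{cordescent}, which reduces the computation of $\Hom_{\hosPre}(X, -)$ to $\pi_0$ of evaluation at $X$ as soon as we have descent.

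First, I would use Lemma \ref{htpypblemma} to replace the local projective homotopy pullback $\Ph$ by the projective homotopy pullback $\Ph^{\proj} = \Eh^{\proj}(f) \times_{\Zh} \Eh^{\proj}(g)$, which is locally weakly equivalent to $\Ph$ and therefore represents the same object in $\hosPre$. After replacing $\Xh, \Yh, \Zh$ with projective fibrant models, which preserves descent because descent is invariant under objectwise weak equivalences, we may assume $\Xh, \Yh, \Zh$ are projective fibrant. Then $p_f^{\proj}$ and $p_g^{\proj}$ are objectwise Kan fibrations between Kan complexes, so $\Ph^{\proj}$ is projective fibrant and its sections $\Ph^{\proj}(X) = \Eh^{\proj}(f)(X) \times_{\Zh(X)} \Eh^{\proj}(g)(X)$ form a strict pullback along a Kan fibration, hence a model for the homotopy pullback of $\Xh(X) \xrightarrow{f(X)} \Zh(X) \xleftarrow{g(X)} \Yh(X)$; by invariance of homotopy pullbacks this is weakly equivalent to $\Qh(X)$.

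The heart of the argument is to verify that $\Ph^{\proj}$ itself satisfies descent. For a hypercover $\Uh \to Y$, the map $\Ph^{\proj}(Y) \to \holim_{\Delta} \Ph^{\proj}(\Uh)$ is obtained from the three descent maps for $\Eh^{\proj}(f)$, $\Eh^{\proj}(g)$, and $\Zh$ by taking (homotopy) pullbacks. Since $\holim_{\Delta}$ commutes with homotopy pullbacks, and since the three component maps are weak equivalences by the descent hypothesis on $\Xh, \Yh, \Zh$ together with the objectwise equivalences $\Xh \simeq \Eh^{\proj}(f)$ and $\Yh \simeq \Eh^{\proj}(g)$, the induced map on pullbacks is a weak equivalence. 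Once this is established, Proposition \ref{cordescent} applied to $\Ph^{\proj}$ (with the identity as its own projective fibrant replacement) yields
\[
\Hom_{\hosPre}(X, \Ph) \;\cong\; \Hom_{\hosPre}(X, \Ph^{\proj}) \;\cong\; \pi_0(\Ph^{\proj}(X)) \;\cong\; \pi_0(\Qh(X)).
\]

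The main obstacle is the commutation of $\holim_{\Delta}$ with homotopy pullbacks in the descent verification: one must pass from the strict pullback description of $\Ph^{\proj}$ to a homotopy pullback of cosimplicial diagrams of Kan complexes and invoke the standard interchange of homotopy limits. This is routine but requires careful bookkeeping to ensure that the cosimplicial legs $\Eh^{\proj}(f)(\Uh) \to \Zh(\Uh)$ and $\Eh^{\proj}(g)(\Uh) \to \Zh(\Uh)$ remain fibrations degreewise, so that the strict pullback of cosimplicial objects models the homotopy pullback before taking $\holim_\Delta$.
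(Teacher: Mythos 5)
Your proposal is correct and follows essentially the same route as the paper's proof: reduce to projective fibrant $\Xh$, $\Yh$, $\Zh$, compute $\Ph^{\proj}(X)$ as a strict pullback along an objectwise fibration so that it models $\Qh(X)$, observe that $\Ph^{\proj}$ inherits descent because homotopy pullbacks commute with the homotopy limits over $\Delta$, and then combine Lemma \ref{htpypblemma} with Proposition \ref{cordescent} to get the chain of bijections. The only differences are cosmetic (you invoke Lemma \ref{htpypblemma} at the outset rather than at the end, and you spell out the degreewise-fibration bookkeeping in the descent check that the paper leaves implicit).
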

\begin{proof} 
Let $\Xh \mapsto \Xh'$ be a functorial projective fibrant replacement in $\sPre$. The invariance property of homotopy pullbacks implies that the homotopy pullback of $\Xh \xrightarrow{f} \Zh \xleftarrow{g} \Yh$ is stalkwise equivalent to the homotopy pullback of the induced diagram $\Xh' \xrightarrow{f'} \Zh' \xleftarrow{g'} \Yh'$. It also implies that, for every $X \in \Tb$, $\Qh(X)$ is equivalent to the homotopy pullback $\Qh'(X)$ of $\Xh'(X) \xrightarrow{f'(X)} \Zh'(X) \xleftarrow{g'(X)} \Yh'(X)$ in $\sS$. Hence we can assume from now on that $\Xh$, $\Yh$ and $\Zh$ are also projective fibrant. 
Now consider the diagram 
\bq\label{globalreplacement}
\xymatrix{
\Xh \ar[d] \ar[r]^f & \Zh \ar@{=}[d] & \ar[l]_g \Yh \ar[d] \\
\Eh^{\proj}(f) \ar[r]^-{p_{f}^{\proj}} & \Zh & \ar[l]_{p_{g}^{\proj}} \Eh^{\proj}(g)}
\eq
where $\Eh^{\proj}$ is a functorial replacement in the projective model structure as before. Let $\Ph^{\proj} = \Eh^{\proj}(f) \times_{\Zh} \Eh^{\proj}(g)$ denote the homotopy pullback of $\Xh \xrightarrow{f} \Zh \xleftarrow{g} \Yh$ calculated in the projective model structure. By definition of pullbacks in $\sPre$, we have $\Ph^{\proj}(X) = \Eh^{\proj}(f)(X) \times_{\Zh(X)} \Eh^{\proj}(g)(X)$ for every $X \in \Tb$. 
The invariance property of homotopy pullbacks implies that $\Ph^{\proj}(X)$ is equivalent to the homotopy pullback $\Qh(X)$ of the diagram $\Xh(X) \xrightarrow{f(X)} \Zh(X) \xleftarrow{g(X)} \Yh(X)$ in $\sS$. (In fact, we could compute $\Qh(X)$ as $\Ph^{\proj}(X)$.)
Moreover, since $\Xh$, $\Yh$ and $\Zh$ satisfy descent for all hypercovers and since homotopy pullbacks commute with homotopy limits in $\sS$, we see that $\Ph^{\proj}$ satisfies descent for all hypercovers as well. By Lemma \ref{descent}, this implies that $\Ph^{\proj}$ is local projective fibrant. 
Finally, by Lemma \ref{htpypblemma}, $\Ph^{\proj}$ is equivalent to the homotopy pullback in the local projective model structure. Hence, by Proposition \ref{cordescent}, for every $X \in \Tb$, there are natural bijections  
\[
\Hom_{\hosPre}(X, \Ph) \cong \Hom_{\hosPre}(X, \Ph^{\proj}) \cong \pi_0(\Ph^{\proj}(X)) \cong \pi_0(\Qh(X)). 
\]
\end{proof}

\section{Logarithmic Hodge filtered function spaces}

We construct spaces which represent logarithmic Hodge filtered cohomology groups. In particular, we will show that we can represent elements in logarithmic Hodge filtered complex bordism groups as triples consisting of a class in complex bordism, a holomorphic form with suitable coefficients and a homotopy that connects both in an appropriate sense.

\subsection{Hodge filtration on forms and Eilenberg-MacLane spaces}

Let $\Ch^*$ be a cochain complex of presheaves of abelian groups on $\Tb$. 
For any given $n$, we denote by $\Ch^*[n]$ the cochain complex given in degree $q$ by $\Ch^q[n]:=\Ch^{q+n}$. The differential on $\Ch^*[n]$ is the one of $\Ch^*$ multiplied by $(-1)^n$. 
The hypercohomology $H^*(U, \Ch^*)$ of an object $U$ of $\Tb$ with coefficients in $\Ch^*$ is the graded group of morphisms $\Hom(\Z_U,a\Ch^*)$ in the derived category of cochain complexes of sheaves on $\Tb$, where $a\Ch^*$ denotes the complex of associated sheaves of $\Ch^*$. We will denote by $K(\Ch^*, n)$ the Eilenberg-MacLane simplicial presheaf corresponding to $\Ch^*[-n]$. 
The following result is a version of Verdier's hypercovering theorem due to Ken Brown.

\begin{prop}\label{verdier} {\rm (\cite[Theorem 2]{brown}, see also \cite{mv}, \cite{jardineverdier})}
Let $\Ch^*$ be a cochain complex of presheaves of abelian groups on $\Tb$. Then for any integer $n$ and any object $U$ of $\Tb$, one has a canonical isomorphism 
\[
H^n(U; \Ch^*)\cong \Hom_{\hosPreTb}(U, K(\Ch^*, n)).
\]
\end{prop}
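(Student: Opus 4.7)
The plan is to recognize both sides as computing the same invariant via hypercovers of $U$, using the descent machinery of Section 2 on the morphism side and the classical Čech-to-derived comparison on the hypercohomology side. The approach has three steps: verify descent for $K(\Ch^*,n)$, apply Proposition \ref{cordescent} to turn morphisms into $\pi_0$ of global sections, and finally recognize this $\pi_0$ as hypercohomology.

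First I would check that the Eilenberg--MacLane simplicial presheaf $K(\Ch^*, n)$ satisfies descent for all hypercovers. Given a hypercover $\Uh \to X$ with each $\Uh_q$ a coproduct of representables, the cosimplicial simplicial abelian group $K(\Ch^*, n)(\Uh_\bullet)$ is obtained by applying the Dold--Kan correspondence degreewise to the cosimplicial cochain complex $\Ch^*(\Uh_\bullet)$. By the Dold--Kan totalization, $\holim_\Delta K(\Ch^*, n)(\Uh_\bullet)$ is the Eilenberg--MacLane space associated to the total cochain complex $\mathrm{Tot}(\Ch^*(\Uh_\bullet))[-n]$; in particular its $\pi_0$ equals $H^n$ of that total complex. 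Descent then amounts to the statement that $\Ch^*(X) \to \mathrm{Tot}(\Ch^*(\Uh_\bullet))$ is a quasi-isomorphism at the level of sheaves, which is exactly the Čech-to-derived comparison.

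With descent in hand, Proposition \ref{cordescent} yields
\[
\Hom_{\hosPreTb}(U, K(\Ch^*,n)) \cong \pi_0\bigl(K(\Ch^*,n)'(U)\bigr)
\]
for any projective fibrant replacement $K(\Ch^*,n) \to K(\Ch^*,n)'$. Identifying the right-hand side with $H^n(U;\Ch^*)$ is then a Čech-theoretic computation: passing to the filtered colimit over all hypercovers of $U$, one has
\[
\colim_{\Uh \to U} H^n\bigl(\mathrm{Tot}(\Ch^*(\Uh_\bullet))\bigr) \cong H^n(U;\Ch^*),
\]
which is Verdier's classical hypercovering theorem, expressing derived-functor hypercohomology as a Čech colimit.

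The principal obstacle is the descent verification, i.e.\ pinning down the homotopy type of $\holim_\Delta K(\Ch^*,n)(\Uh_\bullet)$ and checking the corresponding quasi-isomorphism on sheaves. The cleanest route, following Brown \cite{brown}, avoids the explicit Dold--Kan totalization by first sheafifying and then choosing an injective resolution $a\Ch^* \to J^*$ in complexes of sheaves on $\Tb$. The associated Eilenberg--MacLane presheaf $K(J^*, n)$ is automatically local projective fibrant, its global sections tautologically compute hypercohomology, and the natural map $K(\Ch^*,n) \to K(J^*, n)$ is a local weak equivalence by construction. Lemma \ref{descent} then reduces the proposition to the identity $\pi_0(K(J^*, n)(U)) = H^n(J^*(U)) = H^n(U;\Ch^*)$.
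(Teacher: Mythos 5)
The paper gives no internal proof of Proposition \ref{verdier}; it is quoted from Brown's theorem (with \cite{mv}, \cite{jardineverdier}), so the only comparison available is with the cited argument, which is essentially your closing paragraph. The problem is your primary route. The claim that $K(\Ch^*,n)$ itself satisfies descent for all hypercovers is false for a general complex of presheaves, and the assertion that descent ``amounts to'' the quasi-isomorphism $\Ch^*(X)\to\mathrm{Tot}(\Ch^*(\Uh_\bullet))$ ``at the level of sheaves'' conflates presheaf sections with sheafification: descent is a condition on sections, and it fails already for $\Ch^*$ a sheaf $F$ concentrated in degree $0$ with $H^n(X;F)\neq 0$ for some $n>0$, since then $\pi_0(K(F,n)(X))=H^n(F(X))=0$, so if descent held, Proposition \ref{cordescent} would force $\Hom_{\hosPre}(X,K(F,n))=0$, contradicting the very statement being proved. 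Relatedly, if descent did hold, your colimit over hypercovers in step three would be constant at $H^n(\Ch^*(U))$, so the appeal to Verdier's \v{C}ech formula there is both superfluous under your hypothesis and essentially the theorem you are trying to establish, i.e.\ close to circular.

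Your final paragraph is the correct outline (sheafify, resolve $a\Ch^*\to J^*$, observe $K(\Ch^*,n)\to K(J^*,n)$ is a stalkwise weak equivalence, compute with $K(J^*,n)$), but the one genuinely nontrivial point is exactly the one you declare ``automatic'': local projective fibrancy of $K(J^*,n)$, i.e.\ hypercover descent for a complex of injective sheaves. Projective fibrancy is indeed automatic (simplicial abelian groups are Kan), but descent needs an argument: for a hypercover $\Uh\to X$ the associated complex of abelian sheaves $\cdots\to\Z a\Uh_1\to\Z a\Uh_0\to\Z_X\to 0$ is exact because hypercovers are stalkwise acyclic, and applying the exact functor $\Hom(-,I)$ in the category of sheaves, for each injective term $I=J^q$, yields exactness of $J^q(X)\to J^q(\Uh_0)\to J^q(\Uh_1)\to\cdots$; one then still needs a totalization/spectral-sequence convergence step, which is where boundedness enters --- and since the proposition allows unbounded $\Ch^*$, both this step and the final identity $H^n(J^*(U))=H^n(U;\Ch^*)$ require K-injective rather than merely termwise injective resolutions. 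With these verifications supplied, the Brown-style argument goes through; as written, the first route is wrong and the second asserts precisely the point that carries the content of the theorem.
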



Now let $\Tb$ be the site $\Sm$ of smooth complex varieties with the Nisnevich topology. 
We would like to find a simplicial presheaf which represents Hodge filtered complex cohomology. To have a good filtration on holomorphic forms requires a compact variety. By the work of Hironaka, we know that every smooth complex variety does have a nice compactification. Following Deligne \cite{hodge2} and Beilinson \cite{beilinson}, we will use this fact to construct simplicial presheaves on $\Sm$ whose global sections are isomorphic to the Hodge filtered cohomology groups of $X$.

Let $\Smb$ be the category whose objects are {\em smooth
compactifications}, i.e., pairs $(X,\oX)=(X\subset \oX)$ consisting of a
smooth variety $X$ embedded as an open subset of a projective variety
$\oX$ and having the property that $\oX-X$ is a normal crossing divisor which is the union of smooth divisors.  A map from $(X, \oX)$ to $(Y,\oY)$ is a commutative diagram 
\[
\xymatrix{
X  \ar[r]\ar[d]  &  \oX \ar[d] \\
Y  \ar[r]        &\oY.}
\]

By Hironaka's theorem \cite{hironaka}, every smooth variety over $\C$ admits a smooth compactification. Moreover, for a given smooth variety $X$, the category $C(X)$ of all smooth compactifications of $X$ is filtered (see \cite{hodge2}).

The forgetful functor 
\begin{align*}
u:\Smb & \to \Sm \\
(X, \oX) &\mapsto X
\end{align*}
induces a pair of adjoint functors on the categories of simplicial presheaves  
\[
u^{\ast}:\sPresmb \leftrightarrow \sPresm:u_{\ast}. 
\]
The left adjoint $u^*$ is given by sending a simplicial presheaf $\Fh$ on $\Smb$ to the simplicial presheaf 
\[
X \mapsto u^*\Fh(X) = \colim_{C(X)} \Fh(\oX).
\]

For a smooth complex variety $X$, let $\Omega^p_{X}$ denote the sheaf of holomorphic $p$-forms on $X$. 
Let $\oX$ be a smooth compactification of $X$ and let $D:=\oX - X$ denote the complement of $X$. Let $\Omega^1_{\oX}\langle D \rangle$ be the locally free sub-module of $j_*\Omega^1_{X}$ generated by $\Omega^1_{X}$ and by $\frac{dz_i}{z_i}$ where $z_i$ is a local equation for an irreducible local component of $D$. The sheaf $\Omega^p_{\oX}\langle D \rangle$ of meromorphic $p$-forms on $\oX$ with at most logarithmic poles along $D$ is defined to be the locally free subsheaf $\bigwedge^p\Omega^1_{\oX}\langle D \rangle$ of $j_*\Omega_{\oX}^p$. 
The Hodge filtration on the complex cohomology of $X$ can be defined as the image 
\begin{equation}\label{Hodgefiltration} 
F^pH^n(X; \C):= \Imm (H^n(\oX; \Omega^{* \geq p}_{\oX}\langle D \rangle) \to H^n(X;\C)).
\end{equation}
This definition is independent of the compactification $\oX$ (see \cite{hodge2}).

We denote by $\oOmega^*$ the presheaf of differential graded $\C$-algebras on $\Smb$ that sends a pair $X \subset \oX$ with $D:=\oX - X$ to $\Omega_{\oX}^* \langle D \rangle (\oX)$. For any given integer $p\geq 0$, we denote by $\oOmega^{*\geq p}$ the presheaf on $\Smb$ that sends a pair $X\subset \oX$ to $\Omega_{\oX}^{*\geq p} \langle D \rangle (\oX)$.

Let 
\[
\Omega^{\ast\ge p}_{\oX}\langle D \rangle \to A^{*\geq p}_{\oX}\langle D \rangle
\]
be any resolution by cohomologically trivial sheaves which is
functorial in $X\subset \oX$ and which induces a commutative diagram 
\[
\xymatrix{
\Omega^{\ast\ge p}_{\oX}\langle D \rangle (\oX) \ar[r] \ar[d] & \Omega^{*\ge p}_X(X) \ar[d] \\
A^{*\geq p}_{\oX}\langle D \rangle (\oX) \ar[r] & A^{*\ge p}_X(X)}
\]
where $A_X^{*\ge p}$ denotes a functorial resolution by cohomologically trivial sheaves of $\Omega_X^{*\ge p}$.  
For example, $A^{*\geq p}_{\oX}\langle D \rangle$ and $A^{*\ge p}_X$ could be the Godemont resolutions (\cite[\S 3.2.3]{hodge2}) or the logarithmic Dolbeault resolution (\cite[\S 8]{navarro}). 
Even though $A_X^*$ and $A^{*\geq p}_{\oX}\langle D \rangle$ are double complexes, we will only consider their total complexes.

We denote the presheaf of complexes on $\Smb$ that sends a pair $(X,\oX)$ to $A^{*\geq p}_{\oX}\langle D \rangle (\oX)$ by $F^p\oA^{*}$, and let 
\[
\oOmega^{\ast\ge p}  \to F^p\oA^{*} 
\]
be the associated map of complexes of presheaves on $\Smb$.

Now let $\Vh_{*}$ be an evenly graded $\C$-algebra such that each $\Vh_{2j}$ is a finite dimensional complex vector space. We will write 
\[
F^pH^n(X; \Vh_{*}) := \bigoplus_j F^{p+j}H^{n+2j}(X; \Vh_{2j})
\]
for the graded Hodge filtered cohomology groups. 

The functor $X \mapsto F^pH^n(X; \Vh_{*})$ is representable in $\hosPresm$ in the following way.
For $(X, \oX) \in \Smb$ and $j \in \Z$, let $F^{p+j}\oA^*(\oX; \Vh_{2j})[-2j]$ denote the corresponding complex with coefficients in $\Vh_{2j}$ shifted by degree $2j$. 
We write 
\bq\label{defofFpA*graded}
F^p\oA^*(\oX; \Vh_{*}) = \bigoplus_j F^{p+j}\oA^*(\oX; \Vh_{2j})[-2j].
\eq
Let $F^p\oA^*(\Vh_{*})$ denote the corresponding presheaf on $\Smb$. 
Let $K(F^p\oA^*(\Vh_{*}), n)$ be the associated Eilenberg-MacLane simplicial presheaf. Note that \eqref{defofFpA*graded} induces an isomorphism 
\bq\label{KofFpA*graded}
K(F^p\oA^*(\Vh_{*}), n) \cong \bigvee_j K(F^{p+j}\oA^*(\Vh_{2j}), n+2j).
\eq

For every smooth complex variety $X$, \cite[Theorem 3.5]{compact} shows that there is a natural isomorphism 
\[
\Hom_{\hosPresm}(X, u^*K(F^p\oA^{*}(\Vh_{*}), n)) \cong F^pH^n(X; \Vh_{*}).
\]

A crucial fact for the proof of \cite[Theorem 3.5]{compact} is that the simplicial presheaf $u^*K(F^p\oA^{*}(\Vh_{*}), n)$ satisfies Nisnevich descent. This implies that any projective fibrant replacement of $u^*K(F^p\oA^{*}(\Vh_{*}), n)$ is already local projective fibrant. As a consequence we obtain that,
for every smooth complex variety $X$, there is a natural isomorphism 
\begin{equation}\label{3.5pi}
\pi_0(u^*K(F^p\oA^{*}(\Vh_{*}), n)(X)) \cong F^pH^n(X; \Vh_{*}).
\end{equation}

Finally, we point out that, for every $n$ and $p$, the map of presheaves of complexes 
\[
F^p\oA^*(\Vh_{*})[-n] \to A^*(\Vh_{*})[-n]
\]
induces a morphism of simplicial presheaves
\bq\label{mapofEMspaces}
u^*K(F^p\oA^*(\Vh_{*}), n) \to K(A^*(\Vh_{*}), n).
\eq


\subsection{The singular functor for complex manifolds}

As a short digression, we need to consider simplicial presheaves on complex manifolds as well. 
In this subsection, we let $\Tb$ be the site $\Manc$.   
Let $\DD^n$ be the standard topological $n$-simplex 
\[
\DD^n =\{ (t_0,\ldots, t_n) \in \R^{n+1} | 0 \leq t_j \leq 1, \sum t_j =1 \}.
\]
For topological spaces $Y$ and $Z$, the singular function complex $\Sing(Z,Y)$ is the simplicial set whose $n$-simplices are continuous maps
\[
f \colon Z \times \DD^n \to Y.
\]
We denote the simplicial presheaf  
\[
M \mapsto \Sing(M,Y)=:\Sing Y(M)
\]
on $\Manc$ by $\Sing Y$. 
Since, for any $CW$-complex $Y$, $\Sing Y$ satisfies descent, the criterion of \cite{dhi} implies that $\Sing Y$ is a fibrant object in the local projective model structure on $\sPre$ (see also \cite[Lemma 2.3]{hfcbordism}).

Let $\Vh_{*}$ be an evenly graded complex vector space, and let $K(\Vh_{*}, n)$ be an associated  Eilenberg-MacLane space in the category of $CW$-complexes. Then the simplicial presheaf $\Sing K(\Vh_{*}, n)$ represents the functor of cocycles with coefficients in $\Vh_*$, i.e., for every $M \in \Manc$, there is a natural isomorphism of abelian groups
\[
Z^n(M; \Vh_{*}) \cong \Hom_{\sPre}(M, \Sing K(\Vh_{*}, n)). 
\]
Since $M$ is a representable presheaf, we have a natural bijection of sets 
\[
\Hom_{\sPre}(M, \Sing K(\Vh_{*}, n)) = \mathrm{Sing}_0 K(\Vh_{*}, n)(M).
\]
Moreover, $M$ is a cofibrant object in the local projective model structure on $\sPre$. Hence there is a natural bijection
\bq\label{pi0EMcocycles}
\Hom_{\hosPre}(M, \Sing K(\Vh_{*}, n)) = \pi_0(\Sing K(\Vh_{*}, n)(M)).
\eq


\subsection{Logarithmic Hodge filtered function spaces}\label{HFS}

In this subsection we will work with both sites, $\Smc$ and $\Manc$. 
For $X\in \Smc$, we denote by $X_{\an} \in \Manc$ the associated complex manifold whose underlying set is $X(\C)$.  
This defines a functor
\[
\rho^{-1} \colon \Smc \to \Manc, ~ X \mapsto \rho^{-1}(X):= X_{\an}.
\]
Composition with $\rho^{-1}$ induces a functor 
\[
\rho_* \colon \sPre(\Manc) \to \sPre(\Smc).
\]
Note that $\rho_*$ is the right adjoint in a Quillen pair of functors between the corresponding local projective model structures.

We can now construct logarithmic Hodge filtered spaces whose global sections yield generalized Hodge filtered cohomology groups. 
The idea to define Hodge filtered spaces is similar to the way that differential function spaces were defined for presheaves on the category of smooth manifolds in \cite{hs}.

Let $n$, $p$ be integers and $\Vh_{*}$ an evenly-graded complex vector space. 
Let $Y$ be a CW-complex and let $\iota \in Z^n(Y; \Vh_{*})$ by a cocycle on $Y$. 
A cocycle corresponds to a map of CW-complexes 
\[
Y \to K(\Vh_{*}, n)
\]
and induces a map of simplicial presheaves on $\Manc$
\[
\Sing Y \to \Sing K(\Vh_{*}, n).
\]
Let $|\cdot|$ denote the geometric realization of simplicial sets. 
Using the canonical map $K(\Vh_{*}, n) \to |K(A^*(\Vh_{*}), n)|$ we can form the following diagram in $\sPre(\Smc)$
\bq\label{firstdef}
\xymatrix{
 & \rho_*\Sing Y \ar[d]^{\iota^*}\\
u^*K(F^p\oA^*(\Vh_{*}), n) \ar[r] & \rho_*\Sing |K(A^*(\Vh_{*}), n)|.}
\eq

\begin{defn}
We define the \emph{logarithmic Hodge filtered function space $(Y(p), \iota, n)$} to be the homotopy pullback of \eqref{firstdef} in $\sPre(\Smc)$. 
\end{defn}

Note that $(Y(p), \iota, n)$ depends on $\iota$ only up to homotopy, i.e., if $\iota'$ is another cocycle which represents the same cohomology class as $\iota$ then $(Y(p), \iota, n)$ and $(Y(p), \iota', n)$ are equivalent.

\begin{remark}\label{Irem}
Let us contemplate a little more on diagram \eqref{firstdef}. For a complex manifold $M$, let $Z^n(M\times \Delta^{\bullet}; \Vh_{*})$ be the simplicial abelian group whose group of $k$-simplices is given by $C^{\infty}$-$n$-cocycles on $M \times \Delta^k$ with coefficients in $\Vh_{*}$. We denote the corresponding simplicial presheaf 
\[
M\mapsto Z^n(M \times \Delta^{\bullet}; \Vh_{*})
\]
on $\Manc$ by $Z^n(- \times \Delta^{\bullet}; \Vh_{*})$. 
Our chosen cocycle $\iota$ determines a map of simplicial presheaves
\[
\Sing Y \to Z^n(- \times \Delta^{\bullet}; \Vh_{*}), ~ f \mapsto \iota^*f, 
\]
given by taking the pullback along $\iota$. 
Let $I$ denote the map given by integration of forms
\[
I \colon F^{p+j}A^{n+2j}(X; \Vh_{2j}) \to C^{n+2j}(X; \Vh_{2j}), ~ \eta \mapsto (\sigma \mapsto \int_{\Delta^{n+2j}}\sigma^*\eta).
\]
We can form a diagram of simplicial presheaves 
\bq\label{seconddef}
\xymatrix{
 & \rho_*\Sing Y \ar[d]^{\iota^*}\\
u^*K(F^p\oA^*(\Vh_{*}), n) \ar[r]_I & \rho_*Z^n(- \times \Delta^{\bullet}; \Vh_{*}).}
\eq

The map 
\[
\Sing K(\Vh_{*}, n)(M) \to Z^n(M \times \Delta^{\bullet}; \Vh_{*})
\]
given by pulling back a fundamental cocycle in $Z^n(K(\Vh_{*}, n); \Vh_{*})$ is a simplicial homotopy equivalence (see e.g.\,\cite[Proposition A.12]{hs}). Hence the homotopy pullback of \eqref{seconddef} is homotopy equivalent to the homotopy pullback of \eqref{firstdef}. 
\end{remark}

\begin{remark}\label{remarkDeligne1}
For $Y=K(\Z, n)$, we recover Deligne-Beilinson cohomology in the following way. Let $\iota \colon K(\Z, n) \to K(\C, n)$ be the map that is induced by the $(2\pi i)^p$-multiple of the inclusion $\Z \subset \C$. Then $K(\Z, n)(p) := (K(\Z, n)(p), \iota, n)$ represents Deligne-Beilinson cohomology in the homotopy category of $\sPresmc$ in the sense that there is a natural isomorphism 
\[
H^n_{\Dh}(X; \Z(p)) \cong \Hom_{\hosPresmc}(X, K(\Z, n)(p)).
\]
\end{remark}

\subsection{Hodge filtered spaces and spectra}

Of particular interest is the case when $Y$ is a space in a spectrum. We refer the reader to \cite{hfcbordism} for a more detailed discussion of the maps of spectra involved. We can reinterpret the construction of \cite{hfcbordism} on the level of spaces as follows.  

Let $E$ be a topological $\Omega$-spectrum and let $E_n$ be its $n$th space. We assume that $E$ is rationally even, i.e., $\pi_*E\otimes \Q$ is concentrated in even degrees. Let $\Vh_{*}$ be the evenly graded $\C$-vector space $\pi_{*}E\otimes \C$. 
Let 
\[
\tau \colon E \to E \wedge H\C=:E_{\C}
\]
be a map of spectra which induces for every $n$ the map 
\[
\pi_{2n}(E) \xrightarrow{(2\pi i)^n} \pi_{2n}(E_{\C})
\]
defined by multiplication by $(2\pi i)^n$ on homotopy groups. The choice of such a map is unique up to homotopy. 
For a given integer $p$, multiplication by $(2\pi i)^p$ on homotopy groups determines a map  
\[
E \xrightarrow{(2\pi i)^p\tau} E \wedge H\C.
\]
Let  
\[
E\wedge H\C \to H(\pi_{*}E\otimes \C)
\]
be a map that induces the isomorphism  
\[
\pi_{*}(E\wedge H\C) \cong \pi_{*}E \otimes \C =\Vh_*.
\]
The composite with $(2\pi i)^p\tau$ defines a map of spectra 
\[
\iota \colon E \to H(\Vh_*). 
\]
The inclusion $\Vh_* \into A^*(\Vh_{*})$ induces a map of spectra $H(\Vh_*) \to H(A^*(\Vh_{*}))$. Composition with $E \to H(\Vh_{*})$ defines a map 
\bq\label{iotaEA}
\iota \colon E \to H(A^*(\Vh_{*}))
\eq
which we also denote by $\iota$. We call $\iota$ a \emph{$p$-twisted fundamental cocycle of $E$}.

This map corresponds to a family of maps of spaces which, for each $n$, are of the form 
\[
\iota_n \colon E_n \to K(A^*(\Vh_{*}), n)
\]
and which are compatible with the structure maps of the spectrum $E$.

For given $p$ and $\iota$ and each $n$, we can form the diagram in $\sPre(\Smc)$
\bq\label{Efirstdef}
\xymatrix{
 & \rho_*\Sing E_n \ar[d]^{\iota_n^*}\\
u^*K(F^p\oA^*(\Vh_{*}), n) \ar[r] & \rho_*\Sing |K(A^*(\Vh_{*}), n)|.}
\eq
We will write $(E_n(p), \iota)$ for the homotopy pullback of \eqref{Efirstdef} in $\sPre(\Smc)$. Note that a different choice $\iota'$ of a $p$-twisted fundamental cocycle of $E$ yields a homotopy equivalent simplicial presheaf $(E_n(p), \iota')$. Therefore, we will often drop $\iota$ from the notation and write $E_n(p)$ for $(E_n(p), \iota)$.

\begin{defn}
According to our previous terminology, we call $E_n(p)$ the \emph{$n$th logarithmic Hodge filtered function space of $E$} (even though it is only unique up to homotopy equivalence). 
\end{defn}

\begin{remark}\label{Elogiso}
For $X \in \Smc$, let $\Elog^n(p)(X)$ denote the logarithmic Hodge filtered $E$-cohomology groups of $X$ as defined in \cite[Definition 6.4]{hfcbordism}. It follows from the definition of $E_n(p)$ as a homotopy pullback of \eqref{Efirstdef} that the groups $\Hom_{\hosPre}(X, E_n(p))$, for varying $n$, sit in long exact sequences analog to the one of \cite[Proposition 6.5]{hfcbordism}. This shows that we have a natural isomorphism  
\[
\Elog^n(p)(X) \cong \Hom_{\hosPre}(X, E_n(p)).
\]
Alternatively, we could have remarked that $E_n(p)$ is the $n$th space of the fibrant spectrum $\Elog(p)$ of \cite[\S 6]{hfcbordism}.
\end{remark}


\subsection{The case of smooth projective varieties}\label{projectivecase}

If $X$ is a \emph{projective} smooth complex varieties, we obtain a more concrete description of the global sections of a Hodge filtered space. 
For, in this case, $X$ is an initial object in the filtered category $C(X)$ of all smooth compactifications of $X$. Hence the colimit that computes the value of $u^*K(F^p\oA^{*}(\Vh_{*}), n)$ at $X$ reduces to
\[
u^*K(F^p\oA^{*}(\Vh_{*}), n)(X) = K(F^pA^{*}(\Vh_{*}), n)(X).
\] 
Thus, for $X$ projective, we have 
\bq\label{3.5projectivecocycles2}
\Hom_{\sPresmc}(X, u^*K(F^p\oA^{*}(\Vh_{*}), n)) \cong K(F^pA^{*}(X; \Vh_{*}), n).
\eq
In terms of homotopy classes of maps, isomorphism \eqref{3.5pi} just states the fact 
\[
\pi_0K(F^pA^{*}(X; \Vh_{*}), n) \cong F^pH^n(X; \Vh_{*}). 
\]

Now let $E$ be a rationally even topological $\Omega$-spectrum together with the choice of a $p$-twisted fundamental cocycle $\iota$. Let $\Vh_{*}$ again denote $\pi_{*}E\otimes \C$.  
By Proposition \ref{htpypbprop}, we can calculate the homotopy pullback of \eqref{Efirstdef} objectwise. This implies that the space $E_n(p)(X)$ is homotopy equivalent to the homotopy pullback of the following diagram of simplicial sets 
\bq\label{EfirstdefX}
\xymatrix{
 & \Sing E_n(X) \ar[d]^{\iota_n^*}\\
K(F^pA^*(X; \Vh_{*}), n) \ar[r] & \Sing |K(A^*(\Vh_{*}), n)|(X).}
\eq

By Remark \ref{Elogiso}, this implies that the logarithmic Hodge filtered $E$-cohomology group $\Elog^n(p)(X)$ is isomorphic to the group of connected components of the space $E_n(p)(X)$, i.e., 
\[
\Elog^n(p)(X) = \pi_0(E_n(p)(X)).
\]

We can now read off from diagram \eqref{EfirstdefX} the following characterization of elements of $\Elog^n(p)(X)$. 

\begin{prop}\label{EltsofElog}
For rationally even spectrum $E$ and a smooth projective variety, an element of $\Elog^n(p)(X)$ is given by a triple 
\bq\label{triple}
q \colon X \to E_{n}, ~ \omega \in F^pA^n(X; \Vh_{*})_{\cl}, ~ \xi \in A^{n-1}(X; \Vh_{*})
\eq
such that $d \xi = \iota_n^*q - \omega$, where $q$ is a continuous map, $d$ denotes the differential in $A^{*}(X; \Vh_{*})$, and $\omega$ is a closed form. 
\end{prop}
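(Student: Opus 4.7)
The plan is to combine the identification $\Elog^n(p)(X) \cong \pi_0(E_n(p)(X))$ from Remark \ref{Elogiso} with an explicit description of $\pi_0$ of the relevant homotopy pullback of simplicial sets.

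First, I would invoke Proposition \ref{htpypbprop} to replace the homotopy pullback of \eqref{Efirstdef} in $\sPresmc$ by the objectwise homotopy pullback of \eqref{EfirstdefX} in $\sS$. Applying the proposition requires that all three simplicial presheaves $u^*K(F^p\oA^*(\Vh_*), n)$, $\rho_*\Sing E_n$, and $\rho_*\Sing |K(A^*(\Vh_*), n)|$ satisfy descent for all hypercovers; the first is handled by the results recalled from \cite{compact}, and the latter two via the criterion of \cite{dhi} discussed earlier. Because $X$ is projective, it is initial in the filtered category $C(X)$ of its smooth compactifications, so the colimit defining $u^*K(F^p\oA^*(\Vh_*), n)(X)$ collapses to $K(F^pA^*(X; \Vh_*), n)$.

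Second, I would unpack $\pi_0$ of the homotopy pullback using the standard mapping-path model: for a diagram $A \xrightarrow{f} C \xleftarrow{g} B$ in $\sS$, elements of $\pi_0$ of the homotopy pullback are equivalence classes of triples $(a, h, b)$ with $a \in A_0$, $b \in B_0$, and $h$ a 1-simplex of $C$ from $f(a)$ to $g(b)$. Applied to \eqref{EfirstdefX}, the 0-simplices of $K(F^pA^*(X; \Vh_*), n)$ are, via the Dold-Kan correspondence, precisely the closed forms $\omega \in F^pA^n(X; \Vh_*)_{\cl}$, while the 0-simplices of $\Sing E_n(X)$ are continuous maps $q \colon X \to E_n$.

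Third, I would identify the 1-simplex $h$ with the form $\xi$. Using the reformulation from Remark \ref{Irem}, the middle term may be replaced by $Z^n(X \times \Delta^\bullet; \Vh_*)$ with maps $I$ and $\iota^*$; the de Rham-type quasi-isomorphism together with the Dold-Kan description then identifies a 1-simplex joining $I(\omega)$ and $\iota_n^*q$ with a primitive $\xi \in A^{n-1}(X; \Vh_*)$ satisfying $d\xi = \iota_n^*q - \omega$. The main obstacle will be making this last step precise, since a priori $\iota_n^*q$ is a smooth singular cochain rather than an element of $A^*(X; \Vh_*)$; I would resolve this by working in a common de Rham model in which both sides of the equation make sense, after which any 1-simplex witnessing the required cohomology relation is represented by such a form $\xi$, unique up to the standard equivalence of triples.
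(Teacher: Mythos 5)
Your proposal follows essentially the same route as the paper: projectivity makes $X$ initial in $C(X)$ so that $u^*K(F^p\oA^{*}(\Vh_{*}), n)(X)=K(F^pA^{*}(X;\Vh_{*}), n)$, Proposition \ref{htpypbprop} (with the descent facts you cite) reduces everything to the objectwise homotopy pullback \eqref{EfirstdefX}, Remark \ref{Elogiso} identifies $\Elog^n(p)(X)$ with $\pi_0$ of that pullback, and the triple description is read off from the path-space model, with Remark \ref{Irem} used for the middle corner. The one loose point you flag — that $\iota_n^*q$ is a priori a singular cochain rather than an element of $A^*(X;\Vh_{*})$ — is exactly where the paper itself is informal, and is made precise at the cochain level in Remark \ref{IremE}, so your treatment is consistent with the paper's.
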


\begin{remark}\label{IremE}
In view of Remark \ref{Irem}, we can rewrite diagram \eqref{EfirstdefX} as follows. 
Let $I$ denote again the map given by integration of forms. 
Following the argument in Remark \ref{Irem}, we see that $E_n(p)(X)$ fits into the following homotopy cartesian square of simplicial sets 
\[
\xymatrix{
E_n(p)(X) \ar[r] \ar[d] & \Sing E_n (X) \ar[d]^{\iota_n^*} \\
K(F^pA^*(X; \Vh_{*}), n) \ar[r]_-{I} & Z^n(X \times \Delta^{\bullet}; \Vh_{*}).}
\]

Hence we can represent an element in $\Elog^n(p)(X)$ also as a triple 
\[
q \colon X \to E_{n}, ~ \omega \in F^pA^n(X; \Vh_{*})_{\cl}, ~ h \in C^{n-1}(X; \Vh_{*})
\]
such that $\delta h = \iota_n^*q - I(\omega)$, where $\delta$ denotes the differential in $C^{*}(X; \Vh_{*})$. 
\end{remark}

\begin{remark}\label{remarkDeligne2}
Recall from Remark \ref{remarkDeligne1} that for $E=H\Z$, i.e., $E_n=K(\Z, n)$, $K(\Z, n)(p)$ represents Deligne-Beilinson cohomology in $\hosPresmc$.  
Then Remark \ref{IremE} just rephrases the well-known fact (see e.g. \cite[\S 12.3.2]{voisin1} or \cite{ev}) that an element in $H_{\Dh}^n(X; \Z(p))$ can be represented (in the notation of \cite{voisin1}) by a triple $(a^n_{\Z}, b_{F}^n, c_{\C}^{n-1})$ where $a^n_{\Z}$ is an integral singular cochain of degree $n$, $b^n_F$ is a form in $F^pA^n(X)$, and $c_{\C}^{n-1}$ is a complex singular cochain of degree $n-1$ such that $\delta c_{\C}^{n-1} = a^n_{\Z} - b^n_{F}$.  
\end{remark}


\section{A generalized Abel-Jacobi invariant}

In this section, we will always assume that $X$ is a projective smooth complex variety. 
Let $p$ be a fixed integer. 
Let $MU$ be the Thom spectrum representing complex cobordism. 
Recall that the homotopy groups of $MU$ vanish in all odd and in all negative degrees. Moreover, for $j\ge 0$, $\pi_{2j}MU$ is a finitely generated free abelian group. To shorten the notation we will again write 
\[
\Vh_* := \pi_*MU\otimes_{\Z}\C.
\]
Let $\iota$ be a $p$-twisted fundamental cocycle of $MU$. The reader may find a detailed discussion of fundamental cocycles for $MU$ in \cite[\S 5]{hfcbordism}. Here we just recall that $\iota$ comes equipped with an isomorphism 
\bq\label{splitting}
MU^*(X)_{\C} := MU^*(X)\otimes_{\Z} \C \cong H^*(X; \Vh_*) =  \bigoplus_{j\ge 0} H^{*+2j}(X; \Vh_{2j}).
\eq

\subsection{Cobordism, Jacobians, and Hodge structures}

We first define the generalized Jacobian we mentioned in the introduction. By the construction of the space $MU_n(p)$ as a homotopy pullback and by using isomorphism \eqref{KofFpA*graded}, we deduce that the groups $\MUlog^n(p)(X)$ sit in a long exact sequence of the form  (see also \cite[\S 4.2]{hfcbordism})
\[
\begin{array}{rl}
\ldots \to H^{n-1}(X; \Vh_* ) & \to \MUlog^n(p)(X) \to \\
\to MU^n(X)\oplus F^pH^n(X; \Vh_* ) & \to H^n(X; \Vh_*) \to \ldots
\end{array}
\]  
For $n=2p$, we can split this long exact sequence into the following short exact sequence 
\[
0 \to J_{MU}^{2p-1}(X) \to \MUlog^{2p}(p)(X) \to \HdgMU^{2p}(X) \to 0
\]
which is the bottom row of diagram \eqref{omdiagram}. 
The group on the left hand side is defined as
\[
J_{MU}^{2p-1}(X):= MU^{2p-1}(X)_{\C}/(F^{p}H^{2p-1}(X; \Vh_*) + MU^{2p-1}(X)).
\]
The group $\HdgMU^{2p}(X)$ is defined as the subgroup of $MU^{2p}(X)$ that is given as the pullback
\begin{equation}\label{foot}
\xymatrix{
\HdgMU^{2p}(X) \ar[d] \ar[r] & MU^{2p}(X) \ar[d] \\ 
F^{p}H^{2p}(X; \Vh_*) \ar[r] & H^{2p}(X; \Vh_*).}
\end{equation}

The space $X(\C)$ has the homotopy type of a finite complex. This implies that each group $MU^{k}(X)$ is finitely generated over $\Z$. 
Hence we may consider $MU^{k}(X)$ as a Hodge structure with the following filtration on $MU^{k}(X)_{\C}$. Using isomorphism \eqref{splitting} we set 
\[
F^iMU^{k}(X)_{\C} := \bigoplus_{j\ge 0}F^{i+j}H^{k+2j}(X; \C)\otimes_{\Z} \pi_{2j}MU.
\]

We can then interpret $J_{MU}^{2p-1}(X)$ as the Jacobian associated to the Hodge structure of weight $2p-1$ on $MU^{2p-1}(X)$:
\begin{equation}\label{Jnew}
J_{MU}^{2p-1}(X) = MU^{2p-1}(X)\otimes_{\Z}\C / (F^pMU^{2p-1}(X)_{\C} \oplus MU^{2p-1}(X)).
\end{equation}

Moreover, the canonical map $MU \to H\Z$ induces a map of Hodge structures
\[
(MU^{2p-1}(X), F^*MU^{2p-1}(X)_{\C}) \to (H^{2p-1}(X;\Z), F^*H^{2p-1}(X; \C))
\]
which induces the natural map 
\[
J_{MU}^{2p-1}(X) \to J^{2p-1}(X).
\]


\subsection{A cycle map for algebraic cobordism}

Our goal in this subsection is to describe the vertical maps in diagram \eqref{omdiagram}. 
Let $\Omega^*(X)$ be the algebraic cobordism ring of $X$ of Levine and Morel. By \cite{lm}, $\Omega^*(-)$ is the universal oriented cohomology theory on $\Sm$. Moreover, in \cite[Theorem 7.10]{hfcbordism}, Michael J.\,Hopkins and the author showed that $\MUlog^{2*}(*)(-)$ is an oriented cohomology theory on $\Sm$, and hence the universality of $\Omega^*(-)$ induces a natural transformation 
\[
\tau \colon \Omega^*(-) \to \MUlog^{2*}(*)(-).
\]
Following \cite{lm}, for given $X \in \Sm$, $\tau$ is defined as follows. Recall that $\Omega^p(X)$ is generated by elements $[f\colon Y \to X]$ with $f$ a projective morphism in $\Sm$ of relative codimension $p$. Let $p_Y \colon Y\to \Spec \C$ be the structure map of $Y$. The class $[f]$ is equal to $f_*(p_Y^*(1_{\Omega}))$ where $f_*$ denotes the pushforward along $f$, $p_Y^*$ is the pullback along $p_Y$ in $\Omega^*(-)$, and $1_{\Omega}$ is the unit in $\Omega^0(\C)$.
The image of $[f \colon Y\to X]$ under $\tau \colon \Omega^p(X) \to \MUlog^{2p}(p)(X)$ is then defined as 
\[
\tau([f\colon Y \to X]) := f_*(p_Y^*(1_{\MUlog}))
\]
where now $f_*$ and $p_Y^*$ denote the pushforward and pullback in $\MUlog^{2*}(*)(-)$, respectively, and $1_{\MUlog}$ is the unit in $\MUlog^0(0)(\C)$. The natural map 
\[
\varphi_{\MUlog} \colon \Omega^*(X) \to \MUlog^{2*}(*)(X)
\]
is defined by $\varphi_{\MUlog} ([f]):= \tau([f])$. We remark that this is in fact a ring homomorphism for every $X\in \Sm$ (see \cite[Theorem 7.10]{hfcbordism}).

In order to further describe the element $\varphi_{\MUlog}([f])$ in $\MUlog^{2p}(p)(X)$, we will first look at the image of $[f]$ under the natural map  
\[
\varphi_{MU} \colon \Omega^p(X) \to MU^{2p}(X).
\]
We know that the image of $\varphi_{MU}$ lies in the subgroup $\HdgMU^{2p}(X)$. Therefore, we need a better understanding of the group $\HdgMU^{2p}(X)$. Using isomorphism \eqref{splitting} and the Hodge decomposition of complex cohomology, we can define subgroups of $MU^{k}(X)_{\C}$ 
\begin{equation}\label{decompdefn}
MU^{p,q}(X)_{\C} := \bigoplus_{j\ge 0}H^{p+j, q+j}(X; \C)\otimes_{\Z} \pi_{2j}MU.
\end{equation}
Then $MU^{k}(X)_{\C}$ splits into a direct sum 
\[
MU^{k}(X)_{\C} = \bigoplus_{p+q=k} MU^{p,q}(X)_{\C}.
\]

\begin{lemma}\label{MUpp}
Let $\gamma$ be an element of $\HdgMU^{2p}(X)$, and let $c \in H^{2p}(X; \Vh_*)$ be the image of $\gamma$ under isomorphism \eqref{splitting}. Then $c$ is given by a family of real cohomology classes $(c_j)_{j\ge 0}$ with 
\[
c_j \in H^{p+j, p+j}(X; \C) \otimes \pi_{2j}MU.
\]
\end{lemma}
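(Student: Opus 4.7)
The plan is to extract two pieces of information from the hypothesis $\gamma \in \HdgMU^{2p}(X)$ — a Hodge filtration bound on each component $c_j$, and a reality constraint compatible with complex conjugation — and then combine them via the Hodge symmetry $F^{p+j} \cap \bar F^{p+j} = H^{p+j,p+j}$ on the pure weight $2p+2j$ Hodge structure on $H^{2p+2j}(X;\C)$.

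First I unpack the pullback \eqref{foot}. It asserts that the image $c$ of $\gamma$ under the isomorphism \eqref{splitting} lies in $F^pH^{2p}(X;\Vh_*) = \bigoplus_j F^{p+j}H^{2p+2j}(X;\Vh_{2j})$. Under the decomposition $H^{2p}(X;\Vh_*) = \bigoplus_j H^{2p+2j}(X;\C)\otimes_{\Z}\pi_{2j}MU$ this gives at once $c_j \in F^{p+j}H^{2p+2j}(X;\C)\otimes_{\Z}\pi_{2j}MU$. Next I establish reality by tracing through the construction of the $p$-twisted fundamental cocycle $\iota\colon MU \to H(\Vh_*)$ recalled above. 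Its $(2j)$-component factors as multiplication by $(2\pi i)^{p+j}$ applied to the canonical rational splitting $MU^*(-)_{\Q}\cong \bigoplus_j H^{*+2j}(-;\Q)\otimes\pi_{2j}MU$ extended to $\C$. Since the latter sends the integral class $\gamma$ into $H^{2p+2j}(X;\Q)\otimes\pi_{2j}MU$, we obtain $c_j = (2\pi i)^{p+j} r_j$ for some rational — hence real — class $r_j \in H^{2p+2j}(X;\R)\otimes\pi_{2j}MU$. Consequently the antilinear complex conjugation $\sigma$ acting on $H^{2p+2j}(X;\C)\otimes\pi_{2j}MU$ via the $\C$-factor satisfies $\sigma(c_j) = (-1)^{p+j} c_j$.

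Combining the two, since $\sigma$ carries $F^{p+j}H^{2p+2j}(X;\C)$ to $\bar F^{p+j}H^{2p+2j}(X;\C)$, the identity $\sigma(c_j) = (-1)^{p+j}c_j$ together with the filtration bound forces
\[
c_j \in \bigl(F^{p+j} \cap \bar F^{p+j}\bigr)H^{2p+2j}(X;\C)\otimes_{\Z}\pi_{2j}MU = H^{p+j,p+j}(X;\C)\otimes_{\Z}\pi_{2j}MU,
\]
where the last equality is the Hodge symmetry for the pure weight $2p+2j$ Hodge structure on the projective smooth variety $X$. This yields the lemma; moreover the argument shows that each $c_j$ is a real class up to the overall $(2\pi i)^{p+j}$ twist.

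The main obstacle is the bookkeeping in the second paragraph: confirming that the isomorphism \eqref{splitting}, built from the twisted cocycle $\iota$, indeed sends integral classes in $MU^{2p}(X)$ to $(2\pi i)^{p+j}$ times rational classes in each summand. This requires a careful reading of the construction in \cite[\S 5]{hfcbordism}, but once this bookkeeping is done, the Hodge-theoretic conclusion follows routinely.
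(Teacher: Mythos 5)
Your proof is correct and follows essentially the same route as the paper's, which likewise combines the filtration bound coming from the pullback \eqref{foot} with the fact that under \eqref{splitting} the image of $MU^{2p}(X)$ lands in (twisted) real classes, and then concludes by the Hodge symmetry $F^{p+j}\cap\overline{F^{p+j}}=H^{p+j,p+j}$ on the weight $2p+2j$ structure. The only difference is bookkeeping: you track the $(2\pi i)^{p+j}$-twist explicitly, so conjugation acts on $c_j$ by $(-1)^{p+j}$ rather than trivially, which still forces $c_j\in H^{p+j,p+j}(X;\C)\otimes\pi_{2j}MU$ and is all the lemma needs.
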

\begin{proof}
This follows immediately from diagram \eqref{foot} and the fact that the image of $MU^{2p}(X)$ in $H^{2p}(X; \pi_*MU\otimes \C)$ factors through $H^{2p}(X; \pi_*MU\otimes \R)$. 
\end{proof}

\begin{remark}
In view of our notation \eqref{decompdefn}, we see that the group $\HdgMU^{2p}(X)$ can be identified with the elements in $MU^{2p}(X)$ whose image in $MU^{2p}(X)_{\C}$ lies in the subgroup $MU^{p,p}(X)_{\C}$. 
\end{remark}

\begin{lemma}\label{representingform}
With the notation of the previous lemma, the class $c$ can be represented by a family of closed forms $\omega =(\omega_j)$ in $F^pA^{2p}(X; \Vh_*)_{\cl}$ such that each $\omega_j$ is a real form of type $(p+j, p+j)$. 
\end{lemma}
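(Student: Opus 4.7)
The plan is to reduce the statement to the classical fact, coming from K\"ahler Hodge theory, that every real cohomology class of pure Hodge type $(a,a)$ on a compact K\"ahler manifold admits a real harmonic representative of type $(a,a)$. Since $X$ is smooth projective, $X(\C)$ is compact K\"ahler, so this input is available; I would fix a K\"ahler metric at the outset.

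First, I would fix $j \ge 0$ and choose a $\Z$-basis $e_1,\ldots,e_r$ of the finitely generated free abelian group $\pi_{2j}MU$, and write $c_j = \sum_i \alpha_{j,i} \otimes e_i$ with $\alpha_{j,i} \in H^{p+j,p+j}(X;\C)$. The point used in the proof of Lemma~\ref{MUpp} is that $c$ already lies in the image of $H^{2p}(X;\pi_{*}MU \otimes \R)$, so after choosing the basis each coefficient $\alpha_{j,i}$ can be arranged to be a real cohomology class of type $(p+j,p+j)$.

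Next, I would apply Hodge theory: each real class $\alpha_{j,i}$ admits a unique harmonic representative $\omega_{j,i} \in A^{p+j,p+j}(X;\R)$. Setting $\omega_j := \sum_i \omega_{j,i} \otimes e_i$ produces a closed real form of pure type $(p+j,p+j)$ with coefficients in $\pi_{2j}MU$ representing $c_j$. Since $(p+j,p+j)$-forms sit inside $F^{p+j}A^{2p+2j}(X;\C)$, the family $(\omega_j)_{j \ge 0}$ lies in $F^pA^{2p}(X;\Vh_*)_{\cl}$ and, via the de Rham identification, represents $c$.

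The only thing to verify is that the construction is independent of the chosen basis of $\pi_{2j}MU$ and compatible with the real structure on $\pi_{2j}MU \otimes \C$; both are immediate from the uniqueness of harmonic representatives together with the real linearity of harmonic projection. I do not anticipate a genuine obstacle: the algebraic cobordism coefficients enter only via a free $\Z$-module of finite rank, and so everything reduces, component by component, to the standard Hodge decomposition on the compact K\"ahler manifold $X(\C)$.
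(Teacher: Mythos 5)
Your proof is correct and follows essentially the same route as the paper: the paper's own argument is a terse appeal to Lemma \ref{MUpp} together with Hodge symmetry and the uniqueness of the Hodge decomposition, and your harmonic-representative argument (choose a K\"ahler metric, represent each real $(p+j,p+j)$-class by its real harmonic $(p+j,p+j)$-form, and tensor with a $\Z$-basis of $\pi_{2j}MU$) is just the standard way of making that precise. The only implicit assumption, shared with the statement itself, is that $A^*$ is a Dolbeault-type resolution so that real $(p+j,p+j)$-forms indeed lie in $F^{p+j}A^{2p+2j}$; this is consistent with the paper's conventions and is not a gap.
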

\begin{proof}
This follows from Lemma \ref{MUpp}, Hodge symmetry and the uniqueness of the Hodge decomposition of complex cohomology. 
\end{proof}

\begin{remark}
Note that in both families $(c_j)$ and $(\omega_j)$ there are only finitely many nonzero elements. This is due to the fact that $X_{\an}$ is a compact complex manifold.
\end{remark}

This allows us to describe the image of $\varphi_{MU}$ as follows.

\begin{prop}\label{imageofphiMU}
Let $[f \colon Y \to X]$ be a generator in $\Omega^p(X)$. The image of $\varphi_{MU}(f)$ under isomorphism \eqref{splitting} is represented by a family of closed forms $\omega=(\omega_j)$ in $F^pA^{2p}(X; \Vh_*)_{\cl}$ such that each $\omega_j$ is a real form of type $(p+j, p+j)$. 
\end{prop}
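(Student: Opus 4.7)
My plan is to deduce the statement by combining the two preceding lemmas with one additional verification. By the defining pullback square \eqref{foot}, membership of $\varphi_{MU}([f])$ in $\HdgMU^{2p}(X)$ amounts to the claim that its image in $H^{2p}(X;\Vh_*)$ under isomorphism \eqref{splitting} lies in $F^{p}H^{2p}(X;\Vh_*) = \bigoplus_{j\ge 0} F^{p+j}H^{2p+2j}(X;\C)\otimes \pi_{2j}MU$. Once this is established, I may invoke Lemma \ref{MUpp} to see that each component of the image is in fact a real class of pure Hodge type $(p+j,p+j)$, and then Lemma \ref{representingform} hands me the family $(\omega_j)$ of closed real forms of type $(p+j,p+j)$ in $F^p A^{2p}(X;\Vh_*)_{\cl}$, giving the proposition.

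For the main step, I would rewrite $\varphi_{MU}([f]) = f_{*}(1_{MU}\vert_{Y})$ via the pushforward in complex cobordism and use the fact that isomorphism \eqref{splitting}, constructed from the chosen $p$-twisted fundamental cocycle $\iota$, is multiplicative and compatible with projective pushforwards of complex oriented cohomology theories, up to a Todd-class type correction coming from the comparison of orientations. Under this splitting, the unit $1_{MU}\vert_{Y}\in MU^{0}(Y)_{\C}$ corresponds to a family of classes in $\bigoplus_{j} H^{2j}(Y;\C)\otimes \pi_{2j}MU$ representable by polynomials in Chern forms of the holomorphic tangent bundle of $Y$; in particular these are closed algebraic forms of pure type $(j,j)$ on $Y$.

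The projective morphism $f\colon Y\to X$ has relative complex codimension $p$, and its Gysin pushforward on de Rham cohomology sends $H^{j,j}(Y;\C)$ into $H^{p+j,p+j}(X;\C)$ (strictness of the Hodge decomposition under proper pushforward between smooth projective varieties). Therefore each component of $f_{*}(1_{MU}\vert_{Y})$ under the splitting lies in $H^{p+j,p+j}(X;\C)\otimes \pi_{2j}MU$, which is contained in $F^{p+j}H^{2p+2j}(X;\C)\otimes \pi_{2j}MU$. This verifies $\varphi_{MU}([f])\in \HdgMU^{2p}(X)$ and, more precisely, puts the image in the pure $(p+j,p+j)$-components, which is exactly the input required by Lemma \ref{representingform}.

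The main obstacle is the careful compatibility of \eqref{splitting} with pushforwards: the identification $MU^{*}(-)_{\C}\cong H^{*}(-;\Vh_{*})$ arises from the Boardman-type splitting associated with the fundamental cocycle $\iota$, and matching the $MU$-orientation with the cohomological orientation introduces a Todd-class correction. The point is that this correction is itself given by characteristic classes of the algebraic tangent bundle, hence algebraic and of pure type $(j,j)$ in each degree, so it does not disturb the Hodge-type bookkeeping. Once this compatibility is recorded, tracking Hodge bidegrees through pushforward is routine and the proposition follows by applying Lemma \ref{representingform} to $\gamma=\varphi_{MU}([f])$.
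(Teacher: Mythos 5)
Your proposal is correct in substance, but it takes a genuinely different route from the paper on the key step. The paper offers no separate proof of this proposition: it treats it as an immediate consequence of Lemmas \ref{MUpp} and \ref{representingform}, because the crucial fact that $\varphi_{MU}([f])$ lies in $\HdgMU^{2p}(X)$ is taken for free from the commutativity of diagram \eqref{omdiagram}, i.e.\ from the factorization of $\varphi_{MU}$ through $\varphi_{\MUlog}$ and the exact bottom row, which in turn rests on the universality of $\Omega^*(-)$ and the oriented cohomology structure on $\MUlog^{2*}(*)(-)$ established in \cite{hfcbordism}. You instead verify the Hodge condition directly: writing $\varphi_{MU}([f])=f_*(1)$, transporting through the splitting \eqref{splitting} by a Riemann--Roch type compatibility (the pushforwards in $MU^*(-)_{\C}$ and $H^*(-;\Vh_*)$ agree up to a multiplicative characteristic class of the virtual relative tangent bundle), and then using that the Gysin map of a projective morphism of relative codimension $p$ between smooth projective varieties is a morphism of Hodge structures of bidegree $(p,p)$. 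This buys a more self-contained, geometric argument that does not appeal to the Hodge filtered factorization, at the cost of having to record carefully that the splitting built from the $p$-twisted fundamental cocycle is compatible with pushforwards up to such a correction, and that the $(2\pi i)$-powers in $\iota$ only rescale coefficients and do not affect Hodge bidegree (realness is then supplied by Lemma \ref{MUpp}, exactly as in the paper). One small inaccuracy: the image of the unit $1\in MU^0(Y)_{\C}$ under the splitting is just the unit in $H^0(Y;\C)\otimes\pi_0MU$, not a family of Chern class polynomials; the Chern classes enter only through the Riemann--Roch correction class $T_f$, so the correct identity is $\iota(f_*1)=f^H_*(T_f)$, whose components are algebraic classes of pure type $(j,j)$ on $Y$ --- which is precisely what your bidegree bookkeeping requires, so the conclusion stands.
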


As a consequence, we can also say more about the image of  
\[
\varphi_{\MUlog} \colon \Omega^p(X) \to \MUlog^{2p}(p)(X).
\]

\begin{prop}\label{imageofphiMUlog}
Let $[f \colon Y \to X]$ be a generator in $\Omega^p(X)$. The class $\varphi_{\MUlog}(f)$ can be represented by a triple 
\bq\label{tripleMUlog}
f_{\an} \colon Y_{\an} \to X_{\an} , ~ \omega \in F^pA^{2p}(X; \Vh_*)_{\cl}, ~ \xi \in A^{2p-1}(X; \Vh_*)
\eq
such that $d \xi = \iota^*(f_{\an}) - \omega$ and the image of $f_{\an}$ in $A^{2p}(X; \Vh_*)$ is a family of real forms of type $(p+j, p+j)$.
\end{prop}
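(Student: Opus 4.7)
The strategy is to combine Proposition \ref{EltsofElog}, which represents elements of $\MUlog^{2p}(p)(X)$ as triples, with Proposition \ref{imageofphiMU}, which provides an explicit form representative of $\varphi_{MU}([f])$. The plan is to exhibit a triple of the required shape inside the equivalence class of $\varphi_{\MUlog}([f])$ in $\pi_0(MU_{2p}(p)(X))$.

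First, I would invoke Proposition \ref{EltsofElog} for $E = MU$ and $n = 2p$ to write $\varphi_{\MUlog}([f])$ as some triple $(q_0, \omega_0, \xi_0)$ with $q_0 \colon X \to MU_{2p}$ continuous, $\omega_0 \in F^pA^{2p}(X; \Vh_*)_{\cl}$, and $\xi_0 \in A^{2p-1}(X; \Vh_*)$ satisfying $d\xi_0 = \iota^*q_0 - \omega_0$. The commutativity of diagram \eqref{omdiagram} forces the homotopy class of $q_0$ in $MU^{2p}(X)$ to agree with $\varphi_{MU}([f])$, which is represented by the Pontryagin--Thom map associated to $f_{\an} \colon Y_{\an} \to X_{\an}$ (abusively denoted again by $f_{\an}$). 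Similarly, the Hodge class of $\omega_0$ in $F^pH^{2p}(X; \Vh_*)$ must agree with the class of $\varphi_{MU}([f])$, and Proposition \ref{imageofphiMU} supplies a concrete representative $\omega = (\omega_j)$ consisting of real closed $(p+j, p+j)$-forms for this class under \eqref{splitting}.

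Next, I would modify $(q_0, \omega_0, \xi_0)$ by two successive $1$-simplex adjustments in the homotopy pullback $MU_{2p}(p)(X)$ described by \eqref{EfirstdefX}. A chosen homotopy $q_0 \simeq f_{\an}$ in $\Sing MU_{2p}(X)$, together with a compatible path in the $F^pA^*$-corner, yields a $1$-simplex of $MU_{2p}(p)(X)$ connecting $(q_0, \omega_0, \xi_0)$ to a triple $(f_{\an}, \omega_1, \xi_1)$ with $\omega_1$ cohomologous to $\omega_0$. Since $\omega_1$ and $\omega$ are cohomologous in $F^pA^*(X; \Vh_*)$, choosing $\sigma \in F^pA^{2p-1}(X; \Vh_*)$ with $d\sigma = \omega_1 - \omega$ furnishes a $1$-simplex in the $F^pA^*$-corner connecting $\omega_1$ to $\omega$; setting $\xi := \xi_1 + \sigma$ produces a triple $(f_{\an}, \omega, \xi)$ with $d\xi = \iota^*(f_{\an}) - \omega$ in the same $\pi_0$-class as $(q_0, \omega_0, \xi_0)$, hence another representative of $\varphi_{\MUlog}([f])$.

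The main obstacle is to make these simplicial adjustments precise, namely to verify that a $1$-simplex in one corner of the homotopy pullback $MU_{2p}(p)(X)$, combined with a compatible $1$-simplex in another corner and an appropriate homotopy in the bottom-right vertex $\Sing |K(A^*(\Vh_*), 2p)|(X)$, genuinely assembles into a $1$-simplex of $MU_{2p}(p)(X)$. Unpacking this uses Proposition \ref{htpypbprop} together with the objectwise description of the homotopy pullback, according to which a $1$-simplex in $MU_{2p}(p)(X)$ is exactly a compatible triple of $1$-simplices in the three corners whose faces agree in the bottom-right corner. Once this lifting property is in hand, the preceding paragraph produces the claimed representative and the proposition follows.
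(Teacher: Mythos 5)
Your proposal is correct and follows essentially the same route as the paper, whose proof is simply the observation that Pontryagin--Thom gives the continuous map $q$ corresponding to $f_{\an}$ and that the claim then follows from Propositions \ref{EltsofElog} and \ref{imageofphiMU}; your extra paragraphs merely make explicit the modification of the triple within its path component in $MU_{2p}(p)(X)$, which the paper leaves implicit.
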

\begin{proof}
Via the Pontryagin-Thom construction, the image $[f_{\an}]$ of $f$ in $MU^{2p}(X)$ corresponds to a continuous map $q \colon X \to MU_{2p}$. The assertion then follows from Propositions \ref{EltsofElog} and \ref{imageofphiMU}. 
\end{proof}

\begin{remark}
The proposition shows to what extend the class of $Y \to X$ in $\MUlog^{2p}(p)(X)$ contains more information than the corresponding images in $MU^{2p}(X)$ and $F^pH^{2p}(X; \Vh_*)$: $\varphi_{\MUlog}(f)$ remembers the homotopy that connects the images in $MU^{2p}(X)_{\C}$. We are going to exploit this fact in the construction of the Abel-Jacobi map in the next section. 
\end{remark}


\subsection{The generalized Abel-Jacobi map}

Our final goal is to describe the Abel-Jacobi map 
\[
\Phi_{MU} \colon \Omtop^p(X) \to J_{MU}^{2p-1}(X)
\]
in diagram \eqref{omdiagram}.

Let $\alpha = [Y \to X]$ be a generator in $\Omega^p(X)$. By Remark \ref{IremE}, the image of $\alpha$ in $\MUlog^{2p}(p)(X)$ is given by a triple 
\bq\label{tripleMU}
q \colon X \to MU_{2p}, ~ \omega \in F^pA^{2p}(X; \Vh_{*})_{\cl}, ~ c \in C^{2p-1}(X; \Vh_{*})
\eq
such that $\delta c = I(\omega) - \iota_n^*q$, where $\delta$ denotes the differential in $C^{*}(X; \Vh_{*})$, $q$ is obtained via the Pontryagin-Thom construction and represents the class of $Y$ in $MU^{2p}(X)$, and $\omega$ represents the image of $[Y]$ in $H^{2p}(X; \Vh_*)$.

Now we assume that the image of $\alpha$ in $MU^{2p}(X)$ vanishes. This implies that both the cohomology class of $\omega$ and the homotopy class of $q$ are trivial. Hence there is a form $\eta \in F^pA^{2p-1}(X; \Vh_*)$ such that $d\eta = \omega$, and there is a homotopy $H$ from $q$ to the constant map which sends all of $X$ to the base point of $MU_{2p}$. We consider this homotopy as a map $H \colon X \to \Ph MU_{2p}$ from $X$ to the path space $\Ph MU_{2p}$ of $MU_{2p}$. 
After composition with $\iota \colon MU_{2p} \to K(\Vh_*, 2p)$, $H$ defines a map 
\[
\iota^*H \colon X \to \Ph K(\Vh_*, 2p).
\] 
This map, in turn, defines a cochain $\iota^*H$ in $C^{2p-1}(X; \Vh_*)$ which we, by abuse of notation, also denote by $\iota^*H$. By construction, the boundary of this cochain is $\delta(\iota^*H) = \iota^*q$, where we also write $\iota^*q$ for the cocycle corresponding to the map $\iota^*q \colon X \to K(\Vh_*, 2p)$. 

Hence the triple $(q, \omega, c)$ is homotopic to the triple $(0, 0, \iota^*H - I(\eta) + c)$, or, more precisely, they are path connected in the space $MU_{2p}(p)(X)$ and define the same element in $\pi_0(MU_{2p}(p)(X))$.  
Thus we can assume that $\varphi_{\MUlog}(\alpha)$ is represented by the triple $(0, 0, h)$ with 
\bq\label{hdef}
h := \iota^*H - I(\eta) + c \in C^{2p-1}(X; \Vh_*)~\text{and}~\delta h = 0. 
\eq
In particular, $h$ is a cocycle in $C^{2p-1}(X; \Vh_*)$ and represents an element $[h]$ in $H^{2p-1}(X; \Vh_*)\cong MU^{2p-1}(X)_{\C}$.

In constructing $[h]$, we chose a form $\eta$ and a homotopy $H$. The class of $[h]$ depends on these choices in the following way. First, let $\eta'$ be another form in $F^pA^{2p-1}(X; \Vh_*)$ such that $d\eta' = \omega$. The difference $\eta - \eta'$ is then a closed form. Hence $I(\eta - \eta')$ is a cocycle and defines a class in $F^pH^{2p-1}(X; \Vh_*)$. 

Second, let $H'$ be another homotopy from $q$ to the constant map. Considering $H$ and $H'$ as maps $X \to \Ph MU_{2p}$, we can compose $H$ and $H'$ as paths by first walking with doubled speed along $H$ and then with doubled speed in reversed direction of $H'$. Hence $H$ and $H'$ define a map $H*(-H') \colon X \to \Omega(MU_{2p})$ from $X$ to the loop space of $MU_{2p}$. Since $MU$ is an $\Omega$-spectrum, $\Omega(MU_{2p})$ is homeomorphic to $MU_{2p-1}$. Hence $H$ and $H'$ define a map $H*(-H') \colon X \to MU_{2p-1}$. After taking homotopy classes, we obtain an element $[H*(-H')] \in MU^{2p-1}(X)$. 

If we set $h= \iota^*H - I(\eta) + c$ and $h'=\iota^*H' - I(\eta') + c$, we obtain that the difference of $[h]$ and $[h']$ in $H^{2p-1}(X; \Vh_*)$ is the class of $\iota^*[H*(-H')] - I(\eta - \eta')$, where we also write $\iota$ for the map $MU_{2p-1} \to K(\Vh_*, 2p-1)$. Hence $[h] \in H^{2p-1}(X; \Vh_*)$ is well-defined modulo the images of the subgroups $MU^{2p-1}(X)$ and $F^pH^{2p-1}(X; \Vh_*)$. Hence we get the following theorem as an immediate consequence.

\begin{theorem}\label{mainthm}
Let $\alpha$ be an element in $\Omega^p(X)$ and $h$ be the cocycle defined in \eqref{hdef}. Then the image of $[h]$ in the quotient 
\[
J^{2p-1}_{MU}(X) = MU^{2p-1}(X)_{\C}/(F^{p}H^{2p-1}(X; \Vh_*) + MU^{2p-1}(X))
\]
is the image of $\alpha$ under $\Phi_{MU}$. 
\end{theorem}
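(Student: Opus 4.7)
The plan is to observe that, as the authors signal immediately before the theorem, all the work has essentially been done; my proof consolidates it into three short steps.

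First, I would verify that the triples $(q,\omega,c)$ and $(0,0,h)$ represent the same element of $\MUlog^{2p}(p)(X) = \pi_0(MU_{2p}(p)(X))$. The task is to exhibit an explicit $1$-simplex in the simplicial set $MU_{2p}(p)(X)$ between these two vertices. The null-homotopy $H \colon X \to \Ph MU_{2p}$ of $q$ provides a $1$-simplex in $\Sing MU_{2p}(X)$ from $q$ to the constant map $0$; the primitive $\eta$ of $\omega$ yields, via $t \mapsto \omega - t\,d\eta$, an edge in $K(F^pA^*(X;\Vh_*), 2p)$ from $\omega$ to $0$; and the family of cochains $c_t := c + t\,\iota^*H - t\,I(\eta)$ assembles these three into a valid $1$-simplex in the homotopy pullback described in Remark \ref{IremE}, because the compatibility condition relating the three coordinates is preserved linearly in $t$ and specializes at $t=1$ to $\delta h = 0$.

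Second, I would identify the class in $\MUlog^{2p}(p)(X)$ represented by a triple of the form $(0,0,h)$. From the long exact sequence recalled at the beginning of Section~4.1, the kernel of the forgetful map
\[
\MUlog^{2p}(p)(X) \to MU^{2p}(X) \oplus F^pH^{2p}(X;\Vh_*)
\]
is exactly the image of the connecting map from $H^{2p-1}(X;\Vh_*) = MU^{2p-1}(X)_{\C}$, and under this map $(0,0,h)$ corresponds to the cohomology class $[h]$. Combined with the indeterminacy computation carried out in the paragraph immediately preceding the theorem (showing that a change of choices of $H$ and $\eta$ alters $[h]$ by an element of $MU^{2p-1}(X) + F^pH^{2p-1}(X;\Vh_*)$), this yields a well-defined element of the quotient $J_{MU}^{2p-1}(X)$.

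Third, I would conclude by invoking the defining diagram \eqref{omdiagram}: since $\alpha$ lies in $\Omtop^p(X)$, the image of $\varphi_{\MUlog}(\alpha)$ in $\HdgMU^{2p}(X)$ vanishes by commutativity, so $\varphi_{\MUlog}(\alpha)$ lifts uniquely through the inclusion $J_{MU}^{2p-1}(X) \into \MUlog^{2p}(p)(X)$, and this lift is by definition $\Phi_{MU}(\alpha)$. The main obstacle is the first step — the explicit verification that the proposed interpolation really does give a $1$-simplex in the homotopy pullback space — but this reduces to checking the displayed cocycle compatibility at each time $t$, for which every ingredient is already in place from Sections~3 and~4.
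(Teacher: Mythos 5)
Your overall route is the same as the paper's: the theorem is obtained there precisely by consolidating (i) the assertion that the triples $(q,\omega,c)$ and $(0,0,h)$ are path connected in $MU_{2p}(p)(X)$, (ii) the analysis of how $[h]$ changes under different choices of $\eta$ and $H$, which matches the kernel $F^pH^{2p-1}(X;\Vh_*)+MU^{2p-1}(X)$ of the connecting map $H^{2p-1}(X;\Vh_*)\to \MUlog^{2p}(p)(X)$ coming from the long exact sequence, and (iii) the definition of $\Phi_{MU}$ through diagram \eqref{omdiagram}. Your steps 2 and 3 are fine, and your tacit strengthening of the hypothesis to $\alpha\in\Omtop^p(X)$ is what the statement intends, since the cocycle \eqref{hdef} is only defined under that assumption.

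The one inaccuracy is the mechanism you propose for step 1. A $1$-simplex of the homotopy pullback of Remark \ref{IremE} is not a one-parameter family of vertices satisfying the vertex compatibility at each time $t$, and your family in fact fails that test: with $\omega_t=(1-t)\omega$ and $c_t=c+t\,\iota^*H-t\,I(\eta)$ one computes $\delta c_t=(1-t)\bigl(I(\omega)-\iota^*q\bigr)$, whereas the compatibility at time $t$ would force $\iota^*(H_t)=(1-t)\,\iota^*q$; pulling back the fundamental cocycle along the intermediate stages $H_t$ is not linear in $t$, so the ``preserved linearly in $t$'' check does not go through as stated. What a $1$-simplex actually consists of is: the edge $H$ in $\Sing MU_{2p}(X)$ from $q$ to the constant map, the edge of $K(F^pA^*(X;\Vh_*),2p)$ determined via Dold--Kan by $\eta$ (its faces are $\omega$ and $0$, up to sign), and a filler of the resulting square in $Z^{2p}(X\times\Delta^\bullet;\Vh_*)$ whose four boundary edges correspond to $c$, $\iota^*H$, $I(\eta)$ and $h$. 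Such a filler exists because $Z^{2p}(X\times\Delta^\bullet;\Vh_*)$ is a simplicial abelian group and the boundary cochain $c+\iota^*H-I(\eta)-h$ vanishes identically by the definition \eqref{hdef} of $h$. So the witnesses you chose ($H$, $\eta$, and the relation defining $h$) are exactly the right data and the conclusion stands; only the linear-interpolation justification should be replaced by this filling argument, a step the paper itself asserts without proof.
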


\begin{remark}
We can interpret the class of $h$ also in the following way. Recall that, $X$ being a K\"ahler manifold, the natural map $H^{2p-1}(X; \R) \to H^{2p-1}(X; \C)/F^pH^{2p-1}(X; \C)$ is an isomorphism of $\R$-vector spaces. Because of our convention on the Hodge filtration and gradings, this implies that the natural map 
\[
H^{2p-1}(X; \pi_*MU\otimes \R) \to H^{2p-1}(X; \pi_*MU\otimes \C)/F^pH^{2p-1}(X; \pi_*MU\otimes \C)
\]
is an isomorphism of $\R$-vector spaces, too. 
For $h$ as in \eqref{hdef}, this implies that we can assume that $h$ defines a real class in $H^{2p-1}(X; \pi_*MU\otimes \R) \cong MU^{2p-1}(X)_{\R}$ which depends only on the image of $MU^{2p-1}(X)$. In other words, $h$ defines a unique element in 
\[
MU^{2p-1}(X)_{\R}/MU^{2p-1}(X) \cong MU^{2p-1}(X)\otimes \R/\Z
\]
which equals the image of $\alpha$ under the isomorphism 
\[
J_{MU}^{2p-1}(X) \cong MU^{2p-1}(X)\otimes \R/\Z.
\]
\end{remark}

\begin{remark}
We see that $\Phi_{MU}(\alpha)$ depends on {\it how} $\alpha$ vanishes in $MU^{2p}(X)$ and $F^pH^{2p}(X; \Vh_*)$. Hence calculating $\Phi_{MU}(\alpha)$ is in general a difficult task. But this is not different from the classical Abel-Jacobi map. For example, the calculations for Griffiths' famous examples are not done by just evaluating an integral of a form over some given chain, but use use more sophisticated arguments (see \cite{griffiths} or \cite{voisin2}). 
\end{remark}

\begin{remark}
Using known examples of non-trivial elements in the Griffiths group, we can produce examples of elements in $\Omtop^p(X)$ which lie in the kernel of $\theta \colon \Omega^p(X) \to CH^p(X)$ (see \cite[\S 7.3]{hfcbordism}). This shows that the new Abel-Jacobi invariant is able to detect certain elements in $\Omtop^*(X)$ which the classical invariant $\Phi$ would not see. Nevertheless, we do not yet have an example of the following type: an element in $\Omtop^*(X)$ which maps to the kernel of $\Phi$, but is neither in the kernel of $\theta$ nor in the kernel of $\Phi_{MU}$. This is a much more difficult task which requires a better understanding of the kernel of the map 
\[
MU^{2p-1}(X)\otimes \R/\Z \to H^{2p-1}(X; \R/\Z).
\]
\end{remark}

%
%
%
\bibliographystyle{amsalpha}

\end{document}